\DeclarePairedDelimiter{\abs}{\lvert}{\rvert}
\DeclareMathOperator{\divv}{div}
\renewcommand{\epsilon}{\varepsilon}
\newcommand{\eu}{\mathbf{u}}
\newcommand{\numberset}{\mathbb}
\newcommand{\eps}{\varepsilon}
\newcommand{\N}{\numberset{N}}
\newcommand{\R}{\numberset{R}}
\newcommand{\norm}[1]{\|#1\|}
\newcommand{\wto}{\rightharpoonup}
\theoremstyle{definition}
\theoremstyle{definition}                                                                         
\newtheorem{definizione}{Definizione}[section]
\theoremstyle{definition}                                                                         
\theoremstyle{plain}        
\newtheorem{rmks}[definizione]{Remarks}
\theoremstyle{plain}                                                                            
\newtheorem{thm}[definizione]{Theorem}
\theoremstyle{plain}    
\newtheorem{prop}[definizione]{Proposition}
\theoremstyle{plain}     
\newtheorem{lemma}[definizione]{Lemma}
\theoremstyle{plain}
\newtheorem{cor}[definizione]{Corollary}
\theoremstyle{definition}
\numberwithin{equation}{section}
\begin{document}

\title[Monotone heteroclinic solutions to semilinear PDEs]{Monotone heteroclinic solutions to semilinear PDEs in cylinders and applications}

 \author{Fabio De Regibus}
\address{ \vspace{-0.4cm}
	\newline 
	\textbf{{\small Fabio De Regibus}} 
	\vspace{0.15cm}
	\newline \indent  
	Departamento de An\'alisis Matem\'atico, Universidad de Granada, 18071 Granada, Spain}
\email{fabioderegibus@ugr.es}

\author{David Ruiz}
\address{ \vspace{-0.4cm}
	\newline 
	\textbf{{\small David Ruiz}} 
	\vspace{0.15cm}
	\newline \indent  
	IMAG, Departamento de An\'alisis Matem\'atico, Universidad de Granada, 18071 Granada, Spain}
\email{daruiz@ugr.es}

\keywords{semilinear elliptic equations, heteroclinic solutions}

\subjclass[2020]{35J25, 35B08, 35Q35.}

\thanks{ D.R. has been supported by:
	the Grant PID2021-122122NB-I00 of the MICIN/AEI, the \emph{IMAG-Maria de Maeztu} Excellence Grant CEX2020-001105-M funded by MICIN/AEI, and the Research Group FQM-116 funded by J. Andaluc\'\i a.
F.D.R. has been supported by:
	the Juan de la Cierva fellowship Grant JDC2022-048890-I funded by MICIU/AEI/10.13039/501100011033 and by the “European Union NextGeneration EU/PRTR”.}

\begin{abstract}
In this paper we show the existence of strictly monotone heteroclinic type solutions of semilinear elliptic equations in cylinders. The motivation of this construction is twofold: first,  it implies the existence of an entire bounded solution of a semilinear equation without critical points which is not one-dimensional. Second, this gives an example of a bounded stationary solution for the 2D Euler equations without stagnation points which is not a shear flow, completing previous results of Hamel and Nadirashvili. The proof uses a minimization technique together with a truncation argument, and a limit procedure.
\end{abstract}

\maketitle

\section{Introduction and main results}

Let $N\ge2$ and consider a smooth and bounded domain $\omega\subseteq\R^{N-1}$. We denote by $\Omega$ the unbounded $N$-dimensional cylinder of section $\omega$, i.e.,
\[
\Omega=\omega\times\R\subseteq\R^N.
\]
We are interested in solutions of the following semilinear elliptic problem:
\begin{equation}
\label{PB}
\begin{cases}
-\Delta u=f(u)&\text{in }\Omega,\\
u=0&\text{on }\partial\Omega,
\end{cases}
\end{equation}
for some nonlinear term $f \in  C^1(\R)$. Here, and in the sequel, we write $x=(x',x_N)$ for a generic point in $\R^N$, where $x'=(x_1,\dots,x_{N-1})\in\R^{N-1}$, and we denote $e_N =(0, \dots , 0, 1)$.

 In particular we focus on heteroclinic type solutions of problem~\ref{PB}: given two different solutions $\varphi_1,\varphi_2$ we are looking for another solution $u$ linking $\varphi_1$ and $\varphi_2$ in the sense that
\begin{equation} \label{limits}
\lim_{\lambda\to-\infty}u(\cdot + \lambda e_N)=\varphi_1(\cdot),\quad\text{and}\quad \lim_{ \lambda \to+\infty}u(\cdot + \lambda e_N)=\varphi_2(\cdot).
\end{equation}

In the literature there are many works about heteroclinic solutions for elliptic PDEs in different contexts. In particular, we would like to mention the papers~\cite{Ra94,Ra02} by P. H. Rabinowitz, where the existence of heteroclinic solutions in the strip is shown both for the case of Neumann and Dirichlet boundary conditions, in a periodic setting: that is, the limit functions $\varphi_1$ and $\varphi_2$ are periodic in the $x_N$ variable. For further results in cylinders we refer also to~\cite{A19, BMR16, Ki82,Ra04}. A lot of work has been addressed also in the case of heteroclinic solutions for a non autonomous Allen-Cahn type problem in the whole space $\R^N$, see for instance~\cite{AGM16,AJM00,AJM02,AM05,AM07,AM13,RS03,RS04} and the references therein.

In this paper, instead, we are looking for (strictly) monotone solutions of problem~\ref{PB},~\ref{limits}, where $\varphi_1=0$ and $\varphi_2= \varphi>0$ are independent of $x_N$, that is, they are solutions of the $N-1$ dimensional problem on $\omega$:
\begin{equation}
\label{n-1:PB}
\begin{cases}
-\Delta_{x'}\phi=f(\phi)&\text{in }\omega,\\
\phi=0&\text{on }\partial\omega.
\end{cases}
\end{equation}
Here we denote by $\Delta_{x'}$ the Laplacian with respect to the first $N-1$ coordinates. Clearly problem~\ref{n-1:PB} is the Euler-Lagrange equation of the action functional $I\colon H_0^1(\omega) \to \R$,
\begin{equation} \label{defI}
I(u)=\int_\omega \left( \frac12\abs{\nabla_{x'} u}^2-F(u) \right) \,dx',  \quad \mbox{ where } F(u)=\int_0^uf(s)\,ds.
\end{equation}

In our main result we make the following assumptions:
\begin{equation}
\label{H1} \tag{H1} I(0)= \inf \left \{I(u):  u \in H_0^1(\omega) \right \}=0 \mbox{ and } 0 \mbox{ is an isolated minimizer of }I;
\end{equation}
\begin{equation}
\label{H2} \tag{H2}
\mbox{ there exists } \psi \in H_0^1(\omega), \ \psi >0  \mbox{ such that }  I(\psi)=0.
\end{equation}

In other words, we are assuming that $0$ and $\psi$ are global minimizers of $I$, and that $0$ is isolated. In particular, $0$ and $\psi$ are solutions of~\ref{n-1:PB}, which implies $f(0)=0$. Moreover, it can be proved (see Lemma~\ref{minimal}) that under these assumptions there exists a solution to~\ref{n-1:PB}, denoted by  $\varphi$, which is minimal among the set of global minimizers. In other words, $\varphi \in H_0^1(\omega)$ is a positive function such that $I(\varphi) =0$, and if $\psi\in H_0^1(\omega)$, $\psi>0$, satisfies $I(\psi)=0$, then $\varphi \leq \psi$.

The following is our main result:

\begin{thm}
\label{tmain}
Assume that $f \in  C^1(\R)$ is such that~\ref{H1},~\ref{H2} are satisfied.
Then, there exists $u\in C^{2, \alpha}(\overline\Omega)$ solution of~\ref{PB} such that
\[
0<u<\varphi\quad\text{in }\Omega.
\]
Moreover, u is heteroclinic from $0$ to $\varphi$, i.e.
\[
\lim_{x_N\to-\infty}u(\cdot,x_N)=0,\quad\text{and}\quad \lim_{x_N\to+\infty}u(\cdot,x_N)=\varphi(\cdot),
\]
uniformly in $x'\in\omega$. Finally, $u$ is strictly increasing in $x_N$, that is,
\[
\partial_{x_N} u>0\quad\text{in }\Omega.
\]
\end{thm}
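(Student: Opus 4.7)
The plan is to build $u$ as a $C^2_{\mathrm{loc}}$ limit of minimizers of the action on truncated cylinders $\Omega_R = \omega \times (-R, R)$, and then extract strict monotonicity via the sliding method. After modifying $f$ outside $[0, \max \varphi]$ so that its antiderivative is Lipschitz (harmless, since $u_R$ will be confined to $[0, \varphi]$), I minimize
\[
J_R(u) = \int_{\Omega_R} \bigl( \tfrac{1}{2} |\nabla u|^2 - F(u) \bigr)\, dx
\]
over the affine class $\mathcal{A}_R = \{u \in H^1(\Omega_R) : u = 0 \text{ on the bottom and lateral part of } \partial \Omega_R, \ u(\cdot, R) = \varphi\}$. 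The direct method produces a minimizer $u_R$, and elliptic regularity yields $u_R \in C^{2,\alpha}(\overline{\Omega_R})$. The pointwise bounds $0 \le u_R \le \varphi$ come from the identity $I(\max(v,w)) + I(\min(v,w)) = I(v) + I(w)$ applied slicewise with $w = 0$ and $w = \varphi$: combined with $I(0) = I(\varphi) = \inf I = 0$, this gives $I(\max(v,0)) \le I(v)$ and $I(\min(v,\varphi)) \le I(v)$, and since $\varphi$ is independent of $x_N$ the same truncations can only reduce $|\partial_{x_N} u_R|^2$ pointwise. A uniform bound $J_R(u_R) \le C_0$ follows by comparing against the template $\eta(x_N) \varphi(x')$ with $\eta$ a smooth step on $(-1,1)$, whose energy picks up only bounded contributions from the transition region since $I(0) = I(\varphi) = 0$.

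To prevent a trivial limit, fix $x_0' \in \omega$ with $\varphi(x_0') > 0$, set $c = \varphi(x_0')/2$, choose $x_N^R \in (-R, R)$ with $u_R(x_0', x_N^R) = c$ by continuity, and translate $\tilde u_R(x) = u_R(x', x_N + x_N^R)$. Uniform $C^{2,\alpha}_{\mathrm{loc}}$ estimates together with a diagonal extraction give $\tilde u_R \to u$ in $C^2_{\mathrm{loc}}(\overline{\Omega})$ along a subsequence, with $u$ a solution of~\ref{PB}, $0 \le u \le \varphi$, and $u(x_0', 0) = c$. The hardest step is ensuring the shifted domains exhaust $\Omega$, i.e.\ $R \pm x_N^R \to \infty$: if, for instance, $R - x_N^R$ stayed bounded along a subsequence, the limit would live on a half-cylinder $\omega \times (-\infty, c^*]$ with $u(\cdot, c^*) = \varphi$, and the strong maximum principle applied to the nonnegative $\varphi - u$, which satisfies a linear elliptic equation, would clash with $u(x_0', 0) = c < \varphi(x_0')$. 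This is the main technical obstacle.

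Finally, strict monotonicity comes from the sliding method applied to $u$: comparing with translates $u^\tau(x', x_N) = u(x', x_N + \tau)$, one shows $u^\tau \ge u$ for large $\tau$ using the asymptotic behavior at $\pm\infty$, and then slides $\tau$ down to $0$ using the weak maximum principle on the nonnegative $u^\tau - u$, which satisfies a linear elliptic equation; the strong maximum principle then yields $u^\tau > u$, equivalently $\partial_{x_N} u > 0$. The heteroclinic limits follow: by monotonicity and the bounds, the profile limits $u_\pm(x') = \lim_{x_N \to \pm\infty} u(x', x_N)$ exist and solve~\ref{n-1:PB}, and from $\int_{\R} I(u(\cdot, x_N))\, dx_N \le \liminf_R J_R(u_R) \le C_0 < \infty$ combined with $C^2_{\mathrm{loc}}$ convergence one deduces $I(u_\pm) = 0$; by minimality of $\varphi$ among positive global minimizers, $u_\pm \in \{0, \varphi\}$, and $u(x_0', 0) = c$ forces $u_- = 0$, $u_+ = \varphi$ (with uniform convergence on $\overline{\omega}$ by Dini). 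The strict inequalities $0 < u < \varphi$ follow from the strong maximum principle applied to $u$ and to $\varphi - u$.
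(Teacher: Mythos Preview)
Your overall strategy (minimize on truncated cylinders, normalize, pass to the limit) matches the paper's, and the truncation argument for $0\le u_R\le\varphi$ and the uniform energy bound via a template are both fine. However there are two genuine gaps where your argument, as written, does not go through.

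\textbf{Domain exhaustion.} Suppose $R-x_N^R$ stays bounded and the limit $u$ lives on $\omega\times(-\infty,c^*]$ with $u(\cdot,c^*)=\varphi$. You invoke the strong maximum principle on $w=\varphi-u\ge 0$, which satisfies a linear equation $-\Delta w = q(x)w$ with bounded $q$, and claim a clash with $w(x_0',0)=c>0$. But there is no clash: the strong maximum principle only tells you that $w>0$ in the interior \emph{or} $w\equiv 0$; since $w(x_0',0)>0$ you are in the first alternative, which is perfectly compatible with $w=0$ on the top face $\{x_N=c^*\}$. So nothing contradictory is produced. The paper rules out this scenario by a completely different device: the Hamiltonian
\[
H(t)=\int_{\omega}\Bigl(\tfrac12|\nabla_{x'}u|^2-\tfrac12(\partial_{x_N}u)^2-F(u)\Bigr)\,dx'
\]
is constant in $t$; evaluating at $t=c^*$ gives $H\le 0$, while letting $t\to-\infty$ (using monotonicity of $u$, see below) gives $H=I(\bar v)\ge 0$, hence $\partial_{x_N}u(\cdot,c^*)=0$ and unique continuation forces $u\equiv\varphi$, contradicting $u(x_0',0)=c$.

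\textbf{Monotonicity and circularity.} Your sliding argument starts by asserting $u^\tau\ge u$ for large $\tau$ ``using the asymptotic behavior at $\pm\infty$'', but you only derive the heteroclinic limits \emph{after} establishing monotonicity. Without the asymptotics you have no way to initiate the sliding, and without monotonicity you cannot even define the pointwise limits $u_\pm$. The paper avoids this circularity by proving $\partial_{x_N}u_R>0$ already on each truncated cylinder: since $u_R$ is a global minimizer, the second variation is nonnegative, so the linearized operator $-\Delta - f'(u_R)$ has first Dirichlet eigenvalue $\ge 0$ and hence satisfies the maximum principle; as $\partial_{x_N}u_R$ solves the linearized equation with nonnegative boundary data, one gets $\partial_{x_N}u_R>0$. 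This monotonicity then passes to the limit, is used in the Hamiltonian computation above, and also lets one define $u_\pm$ and identify them via $I(u_\pm)=0$.

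In short, your outline is missing the two key analytical tools the paper relies on: monotonicity of the \emph{approximants} (from stability of minimizers) and the Hamiltonian identity. Once you insert those, the remaining steps you wrote (energy bound, normalization, identification of $u_\pm$ by minimality of $\varphi$, strict inequalities via the strong maximum principle) are correct and essentially the paper's.
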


This result is strongly related to~\cite[Section 4]{Ra02}, where the $x_N$-periodic case is treated. In our case we are interested in deriving also the monotonicity property, which will be essential in the applications of Theorem~ \ref{tmain} that will be presented below.

A comment on our set of hypotheses~\ref{H1} and~\ref{H2} is in order. First, given any bounded smooth domain $\omega \subseteq \R^{N-1}$, we can build $f$ so that~\ref{H1} and~\ref{H2} are satisfied, see Proposition~\ref{example}. Moreover we shall see that, to some extent, assumptions~\ref{H1} and~\ref{H2} are necessary for the existence of the heteroclinic solution given in Theorem~\ref{tmain}, see Proposition~\ref{Hi-nec}. \\

The proof of Theorem~\ref{tmain} follows from a truncated minimization procedure. First, we consider the family of bounded domains:
\[
\Omega_n=\omega\times(-n,n).
\]

For each one of those domains we will consider the problem of finding a solution to the following:
\begin{equation}\label{PBn}
	\begin{cases}
		-\Delta u_n=f(u_n)&\text{in }\Omega_n,\\
		u_n=0&\text{on }\partial \Omega_n\setminus(\omega\times\{n\}),\\
		u_n=\varphi&\text{on }\omega\times\{n\}.
	\end{cases}
\end{equation}

These solutions will be searched as minimizers of the corresponding energy functional. Then, we make $n \to +\infty$ and show that, up to a suitable translation, $u_n$ converges locally to our desired solution. In our proofs the use of a Hamiltonian identity (see for instance~\cite{Gui}) will be a crucial tool.

Our main motivation for this paper comes from two related questions, that we discuss below.

\subsection{Entire solutions of semilinear equations}
Let us consider a bounded entire solution of the problem:
\begin{equation}
\label{semi}
-\Delta u=f(u) \quad\text{in }\R^2,
\end{equation}
for some function $f \in  C^1(\R)$. It is well known that if the following monotonicity condition holds:
\begin{equation}
\label{monotone}
 \partial_{x_N}u(x)>0 \quad\mbox{ for all  } x \in \R^2, 
 \end{equation} then  $u$ is $1$-dimensional, that is, its level sets are hyperplanes (see~\cite{GG98}). In the case of the Allen-Cahn nonlinearity $f(u)=u-u^3$ this is the statement of the well known De Giorgi conjecture. The De Giorgi conjecture has been proved to hold also in dimension 3 (\cite{AC00}); instead, in dimension $N \geq 9$ there are solutions of the problem $-\Delta u = u-u^3$ satisfying~\ref{monotone} which are not $1$-dimensional, see~\cite{DePKW}. For $4 \leq N \leq 8$ the problem is open: it is known that monotone solutions to the Allen-Cahn equation are $1$-dimensional under certain additional conditions, see~\cite{Sa09}.

Back in dimension 2, a similar result can be proved for stable solutions: that is, if $u$ is a stable solution, then it is 1-dimensional\footnote{By the way, the extension of this result to higher dimensions, even to dimension 3, is a very important open problem.}. Here stability means that $Q(\phi) \geq 0$ for all $\phi \in C_0^{\infty}(\R^2)$, where $Q$ is the quadratic form associated to the linearized operator: 
\[
 Q(\phi) = \int_{\R^2}\left( |\nabla \phi|^2 - f'(u) \phi^2\right)\,dx.
 \]
It is well known that the monotonicity property~\ref{monotone} implies stability: for this and other questions in this framework we refer to the monograph~\cite{Dupaigne}.

Hence, one could think of relaxing hypothesis~\ref{monotone} in a different direction, at least in dimension 2. A natural possibility could be replacing the monotonicity condition by:
\begin{equation}
\label{nocrit} 
\nabla u(x) \neq 0 \quad\mbox{ for all  }  x \in \R^2.
\end{equation}

In other words, is it true that bounded solutions to~\ref{semi} without critical points are 1-dimensional? 

The following observation is contained in~\cite{Far03}: under condition~\ref{nocrit}, we can write $\nabla u(x) = \rho(x) e^{i \theta(x)}$, by using complex notation. By regularity estimates $\rho$ is uniformly bounded, and it turns out that 
\begin{equation} \label{BCN} \divv (\rho^2 \nabla \theta)=0. \end{equation} In~\cite{BCN97} Berestycki, Caffarelli and Nirenberg gave a Liouville type result for equation~\ref{BCN}. In our case this result implies that if $\theta$ is bounded then $\theta $ is necessarily constant, and this implies that $u$ is 1-dimensional. Observe that~\ref{monotone} implies that $\theta(x) \in (0, \pi)$, and this argument was used in \cite{Far03} to give an alternative proof of the De Giorgi conjecture in dimension 2. 

In  general, one can obtain the same conclusion under some control on the growth of $\theta$ (see \cite{Salva} for the sharp growth condition that can be admitted).  
In~\cite{HN19} the authors assume $0<c < \rho(x)$ for all $x \in \R^2$: with this assumption the operator in~\ref{BCN} is uniformly elliptic, and by using Harnack estimates they can estimate the growth of $\theta$ and conclude that $\theta$ is constant. But this argument does not work if we only assume~\ref{nocrit}. 

As a consequence of Theorem~\ref{tmain}, we shall show an example of a solution to~\ref{semi} without critical points which is not 1-dimensional. Being more specific, we take $\omega=(0,1)$, and extend the solution given in Theorem~\ref{tmain} to the whole plane by odd reflection. In doing so we obtain an entire solution with the required properties (see Theorem~\ref{nocrit} for details). 

We point out that if $\Omega = (0,1) \times \R$, the solution given in Theorem~\ref{tmain} is an example of a solution of~\ref{PB} with $\{\theta(x):  x \in \Omega\} = (0, \pi)$, as that of~\cite[Theorem 1.3]{GXX24}.

\subsection{Stationary solutions to the 2D Euler equations}
In the recent years a lot of work has been devoted to the study of rigidity result for solutions of the stationary Euler equation
\begin{equation}
\label{Eu}
\begin{cases}
\eu\cdot\nabla \eu=-\nabla p&\text{in }\Omega,\\
\divv \eu=0&\text{in }\Omega,
\end{cases}
\end{equation}
where $\Omega$ is an open subset of $\R^2$. Here $\eu=(\eu^1,\eu^2)$ is the velocity vector field of an inviscid and incompressible fluid and $p$ is the pressure.

Several results regarding rigidity and non rigidity for the stationary Euler equations can be found in~\cite{EFR24,GSPS21,GSPSJY21, GXX24, HK23,HN21,LWX23,LLSX23,Ru22,WZ23}. In~\cite{HN19}, Hamel and Nadirashvili consider bounded solutions of~\ref{Eu} without stagnation points in $\R^2$   nor "at infinity", that is $\inf \ \abs \eu>0$. Under these assumptions, the authors prove that $\eu$ is a shear flow, that means, $\eu(x)={U}(x\cdot e^\perp)e$, for some $e=(e_1,e_2)\in\mathbb S^1$ and $e^\perp=(-e_2,e_1)$. Moreover, they show, by providing explicit counterexamples, that if $\eu$ has some stagnation point, then it is not necessarily a shear flow. Similar results have been obtained by the same authors also in the half-plane $\R^+\times\R$ and in the strip $(0,1) \times \R$ (\cite{HN17}) under non-slip boundary conditions $\eu \cdot \eta =0$, where $\eta$ is the outer normal vector on $\partial \Omega$.

Hence, a natural question is whether such results hold true assuming only the absence of stagnation points, that is, $\eu (x) \neq 0$ for all $x \in \overline{\Omega}$. In other words we allow the presence of stagnation points "at infinity", that is, $\inf_{\R^2}\abs{\eu}=0$.
As a consequence of our main result, we can build bounded solutions in a strip, a half-space or the whole space $\R^2$, without stagnation points, which are not shear flows (Corollary~\ref{cor}). The idea is to look for streamfunctions which are solutions to a semilinear elliptic equation as commented above.

\subsection{Organization of the paper}

The rest of the paper is organized as follows: in the next section we establish some preliminaries and prove Theorem~\ref{tmain}. Section~\ref{sec3} is devoted to a discussion on our assumptions~\ref{H1} and~\ref{H2}. In particular, we show that we can build nonlinear functions $f(u)$ such that those conditions are satisfied. We also show that~\ref{H1} and~\ref{H2} imply the existence of a minimal solution, and finally we point out that, to some extent, these assumptions are necessary for Theorem~\ref{tmain}. In Section~\ref{sec4} we present the applications of Theorem~\ref{tmain} to the questions mentioned above, namely, entire solutions of semilinear equations without critical points and steady solutions of the 2D Euler equations without stagnation points.

\section{Proof of Theorem~\ref{tmain}}
\label{sec2}

First of all, let us state a Hamiltonian-type identity for solutions on a cylinder, in the spirit of~\cite{Gui}. We give the proof below for the sake of completeness.

\begin{prop} \label{H} Let $A \subseteq \R$ an interval, and $\Omega_A = \omega \times A$. For any $t \in A$ we denote by $\omega_t= \{(x', t): \ x' \in \omega\}$. Let $u$ be a $ C^2$ solution of the problem:
\begin{equation*}
	\begin{cases}
		-\Delta u=f(u)&\text{in }\Omega_A,\\
		u=0&\text{on }\partial\omega \times A.
	\end{cases}
\end{equation*}
Then, the Hamiltonian:
\[
H= \int_{\omega_t} \left(\frac 12 \left( |\nabla_{x'} u|^2 - (\partial_{x_N}u)^2 \right) - F(u) \right)\,dx',
\]
is independent of $t$.
\end{prop}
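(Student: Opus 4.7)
The plan is to show that $H$ is constant by proving that $\frac{dH}{dt}\equiv 0$ on the interval $A$. Since $u \in C^2$, we may differentiate under the integral sign to obtain
\[
\frac{dH}{dt}= \int_{\omega_t} \left( \nabla_{x'} u \cdot \nabla_{x'} (\partial_{x_N} u) - \partial_{x_N} u \, \partial_{x_N x_N} u - f(u)\, \partial_{x_N} u \right)\,dx'.
\]

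Next I would integrate by parts the first term in the $x'$ variables on the cross-section $\omega$, writing
\[
\int_{\omega_t} \nabla_{x'} u \cdot \nabla_{x'} (\partial_{x_N} u)\,dx' = - \int_{\omega_t} \Delta_{x'} u \, \partial_{x_N} u \,dx' + \int_{\partial \omega_t} (\partial_{\nu'} u)\, \partial_{x_N} u \,d\sigma,
\]
where $\nu'$ is the outer unit normal to $\partial\omega$ in $\R^{N-1}$. The key observation is that the lateral Dirichlet condition $u\equiv 0$ on $\partial\omega\times A$ forces $\partial_{x_N} u \equiv 0$ on $\partial\omega\times A$ as well (since $u$ vanishes identically along any vertical segment in $\partial\omega\times A$), so the boundary integral drops out. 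Using the equation $-\Delta u = f(u)$ to write $\Delta_{x'} u = -f(u) - \partial_{x_N x_N} u$ and substituting back yields
\[
\frac{dH}{dt}= \int_{\omega_t} \left( f(u) \, \partial_{x_N} u + \partial_{x_N x_N} u \, \partial_{x_N} u - \partial_{x_N} u \, \partial_{x_N x_N} u - f(u)\, \partial_{x_N} u \right)\,dx' = 0,
\]
and the conclusion follows.

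The only non-routine point is justifying the vanishing of the boundary term; I would make this explicit by noting that for fixed $x_0' \in \partial \omega$ the map $t \mapsto u(x_0', t)$ is identically zero on $A$, hence its derivative $\partial_{x_N} u(x_0', t)$ also vanishes. Everything else is a direct computation relying only on the $C^2$ regularity of $u$, which legitimises both differentiation under the integral sign and the integration by parts on the smooth bounded domain $\omega$.
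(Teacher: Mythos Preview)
Your proof is correct and follows essentially the same route as the paper: differentiate $H(t)$ under the integral sign, then combine the PDE with an integration by parts in the $x'$ variables to see that all terms cancel. The only cosmetic difference is the order of operations (the paper first substitutes $-\partial_{x_N x_N}^2 u - f(u)=\Delta_{x'}u$ and then integrates by parts, whereas you integrate by parts first and substitute afterwards), and you are actually more explicit than the paper about why the boundary term on $\partial\omega$ vanishes.
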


\begin{proof}
It suffices to write $H$ as:
\[
 H(t)= \int_{\omega}	\left(\frac 12 \left( |\nabla_{x'} u(x',t)|^2 - (\partial_{x_N}u(x',t))^2 \right) - F(u(x',t)) \right)\, dx'.
 \]
Then we compute the derivative with respect to $t$ and integrate by parts to conclude:
\begin{align*}
 \frac{d}{dt} H(t)&= \int_{\omega} \left(	\nabla_{x'} u \cdot 	\nabla_{x'} (\partial_{x_N}u) - \partial_{x_N}u \,  \partial^2_{x_N x_N}u - f(u) \partial_{x_N}u \right) \, dx'\\
	& = \int_{\omega} \left(	\nabla_{x'} u \cdot 	\nabla_{x'} (\partial_{x_N}u) + \partial_{x_N}u \,  \Delta_{x'}u  \right) \, dx'=0.
\end{align*}

\end{proof}

We now point out that under assumptions~\ref{H1} and~\ref{H2}, there exists a minimal positive minimizer of $I$, which will be denoted by $\varphi$. This must be rather well known, but we have not found an explicit reference, so we include its proof in Section 3 (see Lemma~\ref{minimal}).

We are now in conditions to address the proof of Theorem~\ref{tmain}. The general strategy is to build the solution $u$ of the problem~\ref{PB} by approximation on the domain $\Omega$. At least formally, the energy functional associated to problem~\ref{PB} is
\[
J(u)=\int_\Omega\mathcal L(u)\,dx,
\]
where
\[
\mathcal L(u)=\frac12\abs{\nabla u}^2-F(u), \quad F(u)=\int_0^uf(s)\,ds.
\]
Let us recall that, for any  $n\in\N$, we defined the approximating domains:
\[
\Omega_n=\omega\times(-n,n).
\]
Hence we set
\[
\mathcal{H}_n=\Set{u\in H^1(\Omega_n)|u=0\text{ on }\partial\Omega_n\setminus\{x_N=n\},\quad u=\varphi\text{ on }\{x_N=n\}},
\]
where $\varphi$ is the minimal solution given by Lemma~\ref{minimal}.

By taking a suitable function $\xi\colon[-n,n] \to \R$ with $\xi(-n)=0$ and $\xi(n)=1$, we have that $\phi(x', x_N) = \xi(x_N) \varphi(x') \in \mathcal{H}_n$, so that $\mathcal{H}_n$ is not empty. Then we consider the restriction of the functional $J$ to the set $\mathcal{H}_n$,
\[
J_n\colon \mathcal{H}_n \to \R, \quad J_n(u)=\int_{\Omega_n}\mathcal L(u)\,dx.
\]

The first step in the proof of Theorem~\ref{tmain} is the following:

\begin{prop} For any $n \in \N$, we denote $ c_n = \inf \ J_n$.	
Then, the following assertions hold true:
\begin{enumerate}
	\item $c_n >0$ and $c_n$ is decreasing in $n$.
	\item There exists $u_n \in \mathcal{H}_n$ such that $J_n(u_n)=c_n$. In particular, $u_n$ is a solution of~\ref{PBn}.
	\item $0<u_n(x', x_N) <\varphi(x')$ for all $(x', x_N) \in \Omega_n$. 
	\item $H_n < 0$ where $H_n$ denotes the value of the Hamiltonian defined in Proposition~\ref{H}.
	\item $u_n \in  C^{2, \alpha}(\overline{\Omega_n})$, and $\partial_{x_N} u_n(x) >0$ for all $x \in \Omega_n$.
\end{enumerate}	

\end{prop}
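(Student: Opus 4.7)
The plan is to construct $u_n$ by the direct method applied to a suitable truncation of $J_n$, and to establish the properties in the order (2)--(3)--(1)--(4)--(5). First I would replace $f$ by a $C^1$ truncation $\tilde f$ agreeing with $f$ on $[0,M]$ (where $M:=\|\varphi\|_\infty$), with $\tilde f(s)=0$ for $s\leq 0$ and $\tilde F$ of at most linear growth at $+\infty$, so that $\tilde J_n$ is coercive and weakly lower semicontinuous on the closed affine subset $\mathcal{H}_n\subset H^1(\Omega_n)$. The direct method then yields a minimizer $u_n$, and truncation at $0$ (which does not increase $\tilde J_n$) forces $u_n\geq 0$. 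For the upper bound $u_n\leq\varphi$, I extend $\varphi$ to $\Omega_n$ as $x_N$-independent and set $v=\min(u_n,\varphi)$, $w=\max(u_n,\varphi)$; the pointwise identities $|\nabla v|^2+|\nabla w|^2=|\nabla u_n|^2+|\nabla\varphi|^2$ and $\tilde F(v)+\tilde F(w)=\tilde F(u_n)+\tilde F(\varphi)$ give
\[
\tilde J_n(v)+\tilde J_n(w)=\tilde J_n(u_n)+\tilde J_n(\varphi)=c_n,
\]
since $\tilde J_n(\varphi)=2n\cdot I(\varphi)=0$. The function $w$ has trace $\varphi$ on both the top and bottom caps, and a slicewise Fubini decomposition together with~\ref{H1} (which gives $I\geq 0$) shows $\tilde J_n(w)\geq 0$, hence $\tilde J_n(v)\leq c_n$; since $v\in\mathcal{H}_n$, $v$ is another minimizer and thus a classical solution. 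The strong maximum principle applied to $u_n-v\geq 0$---a solution of a linear equation with $L^\infty$ coefficient which vanishes on the nonempty open set $\{u_n<\varphi\}$---forces $u_n\equiv v$, so $u_n\leq\varphi$. Since then $u_n\in[0,M]$ and $\tilde f=f$ on this range, $u_n$ solves~\ref{PBn}, proving~(2). The strict inequalities in~(3) follow from the strong maximum principle applied separately to $u_n$ and $\varphi-u_n$.

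\textbf{Parts (1) and (4).} For~(1), Fubini decomposes
\[
J_n(u)=\int_{-n}^{n}\Big(I(u(\cdot,x_N))+\tfrac12\int_\omega(\partial_{x_N}u)^2\,dx'\Big)\,dx_N,
\]
with nonnegative integrand by~\ref{H1}. Hence $c_n\geq 0$, and equality would force $u_n$ to be $x_N$-independent, contradicting its boundary data; so $c_n>0$. Monotonicity: extending any $u_n\in\mathcal{H}_n$ to $\mathcal{H}_{n+1}$ by $0$ on $\omega\times(-(n+1),-n)$ and by $\varphi$ on $\omega\times(n,n+1)$ adds zero energy (as $I(0)=I(\varphi)=0$ and neither extension has $x_N$-derivative), giving $c_{n+1}\leq c_n$. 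For~(4), Proposition~\ref{H} gives that $H_n$ is constant in $t$; evaluating at $t=-n$, where $u_n\equiv 0$ on $\omega$, yields $H_n=-\tfrac12\int_\omega(\partial_{x_N}u_n(\cdot,-n))^2\,dx'$. Since $u_n>0$ in $\Omega_n$ vanishes on the smooth bottom face, the Hopf lemma gives $\partial_{x_N}u_n>0$ at interior points of $\omega\times\{-n\}$, so $H_n<0$.

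\textbf{Part (5) and the main obstacle.} The regularity $u_n\in C^{2,\alpha}(\overline{\Omega_n})$ follows from a standard elliptic bootstrap: the bound $u_n\in L^\infty$ gives $W^{2,p}$ via $L^p$ theory, Sobolev embedding yields $C^{1,\alpha}$, and Schauder estimates produce $C^{2,\alpha}$ up to the boundary (the data $\varphi$ vanishes on $\partial\omega$ ensuring compatibility at the corners). For $\partial_{x_N}u_n>0$ I would apply the sliding method: for $\lambda\in(0,2n)$ set $u_n^\lambda(x)=u_n(x+\lambda e_N)$ on $\omega\times(-n,n-\lambda)$; the strict inequalities of~(3) guarantee $u_n^\lambda>u_n$ on the top and bottom of the overlap (with equality on the lateral faces). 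For $\lambda$ close to $2n$ the overlap is a thin strip, and a small-domain maximum principle for the linear equation $-\Delta(u_n^\lambda-u_n)=c(x)(u_n^\lambda-u_n)$ gives $u_n^\lambda\geq u_n$ inside; one then slides $\lambda$ down to $0$, preserving the inequality via the strong maximum principle and continuity. Differentiating at $\lambda=0^+$ produces $\partial_{x_N}u_n\geq 0$; since $v_n:=\partial_{x_N}u_n$ solves $-\Delta v_n=f'(u_n)v_n$, the strong maximum principle gives $v_n>0$ or $v_n\equiv 0$, and~(4) rules out the latter. The main obstacle is the parallelogram argument in the first paragraph leading to $u_n\leq\varphi$: lifting the energy inequality to a pointwise bound requires recognizing $v$ as a second minimizer and then applying the strong maximum principle carefully. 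The sliding method is standard but technically delicate, and relies essentially on the strict boundary separation of~(3) together with $H_n<0$ from~(4) to exclude the trivial alternative $\partial_{x_N}u_n\equiv 0$.
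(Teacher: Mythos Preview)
Your overall strategy is sound and yields the result, but it diverges from the paper in two substantive ways and has one technical wrinkle worth flagging.

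\textbf{Comparison with the paper.} The paper never modifies $f$. Instead it takes a minimizing sequence $v_k$ for $J_n$ and replaces it by the truncation $w_k=\min(\max(v_k,0),\varphi)$; using the Fubini lower bound $J_n(u)\geq\int_{-n}^{n}I(u(\cdot,x_N))\,dx_N\geq 0$ (applied to $(v_k)^-$ and to $\max(v_k,\varphi)$) it shows $J_n(w_k)\leq J_n(v_k)$. This gives a minimizing sequence already trapped in $[0,\varphi]$, hence uniformly bounded in $L^\infty$, from which the $H^1$ bound and the existence of a minimizer follow without altering $f$. For monotonicity the paper does \emph{not} slide: since $u_n$ is a global minimizer one has $J_n''(u_n)\geq 0$, so the first eigenvalue of $-\Delta-f'(u_n)$ on $\Omega_n$ is nonnegative and the maximum principle holds for this operator; as $\partial_{x_N}u_n$ solves the linearized equation with nonnegative boundary data, one gets $\partial_{x_N}u_n\geq 0$ and then $>0$ by the strong maximum principle. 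For $H_n<0$ the paper uses unique continuation (if $\partial_{x_N}u_n(\cdot,-n)\equiv 0$ then $u_n\equiv 0$), and for $C^{2,\alpha}$ regularity up to the corners it performs an explicit odd reflection of $u_n-\varphi$ across $\{x_N=n\}$ (and similarly at $x_N=-n$). Your min/max parallelogram, Hopf lemma, bootstrap, and sliding are all legitimate alternatives; the paper's route is shorter for monotonicity because it exploits the variational origin of $u_n$ directly, whereas your sliding argument works for any solution satisfying the strict bounds of~(3), independent of minimality.

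\textbf{Technical caveat in your truncation.} Two points need care. First, a $C^1$ function $\tilde f$ with $\tilde f\equiv f$ on $[0,M]$ and $\tilde f\equiv 0$ on $(-\infty,0]$ does not exist unless $f'(0)=0$, which is not assumed; you should either relax to $\tilde f\in C^0$ (sufficient for the direct method, with $C^1$ recovered after establishing $0\leq u_n\leq\varphi$) or truncate only for $|s|$ large. Second, and more substantively, your step ``Fubini together with~\ref{H1} gives $\tilde J_n(w)\geq 0$'' actually requires $\tilde I\geq 0$ on $H_0^1(\omega)$, not merely $I\geq 0$: since $w=\max(u_n,\varphi)$ is not yet known to be bounded by $M$, the slices $w(\cdot,x_N)$ may leave the range where $\tilde F=F$. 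This is fixable (e.g.\ choose $\tilde f$ so that $\tilde F\leq F$ globally, or first prove $u_n\leq M$ by a separate truncation argument), but as written it is a gap. The paper sidesteps this entirely because it applies the Fubini bound with the original $J_n$ and $I$, and only to the minimizing \emph{sequence}, where no a priori coercivity is needed.
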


\begin{proof}
	
We first show that $c_n \geq 0$. For any $u \in H^1(\Omega_n)$ with $u=0$ on $\partial\omega\times(-n,n)$ - and then in particular for all $u\in \mathcal{H}_n$ - there holds:
\begin{equation}
\label{c_n>0}
\begin{split}
J_n(u)= \int_{\Omega_n}\mathcal L(u(x))\,dx& = \int_{-n}^n \int_{\omega} \mathcal L(u(x',x_N)) \ dx' \ dx_N  \\& \geq  \int_{-n}^n I(u(\cdot,x_N)) \ dx_N \geq 0,
\end{split}
\end{equation}
where $I$ is defined in~\ref{defI}. 
Let $n, m \in \N$ with $n<m$, and take $u \in \mathcal{H}_n$. Then we can extend:
\[
 \tilde{u}(x', x_N) = \begin{cases}u(x', x_N)  & \mbox{ if } x_N \in (-n,n),  \\ 0  & \mbox{ if } x_N < -n,  \\ \varphi(x)  & \mbox{ if } x_N > n.   \end{cases} 
 \]

Clearly $\tilde{u} \in \mathcal{H}_m$ and $ J_m(\tilde u) =  J_n(u)$. In this way, we can embed $\mathcal{H}_n \subset \mathcal{H}_m$ and then $c_n \geq c_m$. 

In order to prove (2), consider a minimizing sequence $(v_k)_{k\in\N}\subseteq \mathcal{H}_n$ such that $J_n(v_k)\to c_n$, as $k\to+\infty$. Then it is not hard to see that
\[
w_k(x)=
\begin{cases}
	0,&\text{if }v_k(x)\le0,\\
	v_k(x),&\text{if } 0<v_k(x)<\varphi(x),\\
	\varphi(x),&\text{if } v_k(x)\ge\varphi(x),
\end{cases}
\]
is still a minimizing sequence. Indeed taking into account that 
$(v_k)^-=\min\{v_k,0\}$, $\max\{\varphi,v_k\}\in H^1(\Omega_n)$ and they both vanish on $\partial\omega\times(-n,n)$,~\ref{c_n>0} implies
\begin{gather*}
\int_{\{v_k\le0\}}\mathcal L(v_k)\,dx=J_n((v_k)^-)\ge0,\\
\int_{\{v_k<\varphi\}}\mathcal L(\varphi)\,dx+\int_{\{v_k\ge\varphi\}}\mathcal L(v_k)\,dx=J_n(\max\{\varphi,v_k\})\ge0,
\end{gather*}
and then
\begin{align*}
	J_n(w_k)	&=\int_{\{0<v_k<\varphi\}}\mathcal L(v_k)\,dx+\int_{\{v_k\ge\varphi\}}\mathcal L(\varphi)\,dx\\
	&\le\int_{\{v_k\le0\}}\mathcal L(v_k)\,dx+\int_{\{0<v_k<\varphi\}}\mathcal L(v_k)\,dx+\int_{\{v_k\ge\varphi\}}\mathcal L(\varphi)\,dx\\
	&\quad+\int_{\{v_k<\varphi\}}\mathcal L(\varphi)\,dx+\int_{\{v_k\ge\varphi\}}\mathcal L(v_k)\,dx\\
	&= J_n(v_k)+\int_{-n}^nI(\varphi)\,dx_N=J_n(v_k).
\end{align*}

In particular from the uniform $L^\infty$ boundedness of $w_k$, we get
\[
\left|\int_{\Omega_n} F(w_k)\,dx\right|\le C_n,
\]
for some positive constant $C_n$ independent of $k$. This estimate, together with the fact that $w_k$ is a minimizing sequence and $0\le w_k\le\varphi$ in $\Omega_n$, implies that $\norm{w_k}_{H^1(\Omega_n)}$ is uniformly bounded in $k$. Up to a subsequence we can assume that $w_k \rightharpoonup w$ in $H^1$ sense, and moreover, $w_k \to w$ pointwise. By the weak lower semicontinuity of the norm in $H^1$ and the dominated convergence theorem we obtain that $w$ is a minimizer, and (2) is proved. In what follows we will denote by $u_n$ the minimizer of $J_n$. As a byproduct we get $c_n >0$, completing the proof of (1).

Of course from the fact that $0\le w_k\le\varphi$ in $\Omega_n$ and the pointwise convergence of $w_k$ to $u_n$, one has $0\le u_n\le\varphi$ in $\Omega_n$.	The strict inequality stated in (3) will be a consequence of (5).
	
In order to prove (4) we just compute the Hamiltonian at $t=-n$:
\[
H_n = - \frac 1 2 \int_{\omega} (\partial_{x_N} u_n(x', -n))^2 \, dx' \leq 0.
\]

Moreover, if $H_n$ is equal to $0$, have that $\partial_{x_N} u_n(x', -n) =0$ for all $x' \in \omega$. By unique continuation this implies that $u_n \equiv 0$, which is impossible. 
	
We now prove (5). Standard regularity estimates imply that $u_n \in  C^{0, \alpha}(\overline{\Omega}_n) \cap  C^{2, \alpha} (\overline{\Omega}_n \setminus \mathcal N_\eps)$, where:
\[
 \mathcal N_{\eps} = \{(x', x_N) \in \overline{\Omega}_n: \ \mathrm{dist}(x', \partial \omega  ) + \min \{ |x_N-n|, |x_N+n| \} < \eps\}.
 \]	
We now show that the $ C^{2, \alpha}$ regularity extends to the whole domain. Indeed, define: $\zeta(x', x_N) = u_n(x', x_N)- \varphi(x')$. It is clear that
\[
- \Delta \zeta(x)= f(u_n(x)) - f(\varphi(x)) = h(x),
\]
 where $h \in  C^{0, \alpha}(\Omega_n)$, and moreover $\zeta(x', n)=0$, $h(x', n)=0$ for all $x' \in \omega$. We extend $\zeta$ and $h$ by reflection:
\[
\tilde{\zeta}(x', x_N)  =\begin{cases} \zeta(x',x_N) & \mbox{ if } x_N \leq n, \\ -\zeta(x', 2n-x_N) & \mbox{ if } n < x_N \leq 2 n, \end{cases} 
\]
\[
\tilde{h}(x', x_N)  = \begin{cases} h(x',x_N) & \mbox{ if } x_N \leq n, \\ -h(x', 2n-x_N) & \mbox{ if } n < x_N \leq 2 n. \end{cases} 
\]

Clearly $\tilde{h} \in  C^{0,\alpha} (\omega \times (-n, 2n))$ and $\tilde{\zeta}$ is a (weak, a priori) solution of the problem:
\[
 -\Delta \tilde\zeta(x)= \tilde h(x), \quad x \in \omega \times (-n,2n).
 \]

By local regularity up to the boundary, we conclude that $\tilde{\zeta}\in  C^{2,\alpha}$ in a neighborhood of $\omega \times \{n\}$, which implies regularity of $u_n$. In an analogous way we can argue around $\omega \times \{-n\}$, and we conclude.

The function $\partial_{x_N}u_n$ is a weak solution of the linearized problem
\[
\begin{cases}
	-\Delta (\partial_{x_N}u_n)=f'(u_n)\partial_{x_N}u_n&\text{in }\Omega_n,\\
	\partial_{x_N}u_n\ge0&\text{on }\partial\Omega_n,
\end{cases}
\]
where the boundary condition is satisfied since $u_n\equiv0$ on $\partial\omega\times(-n,n)$ and the fact that $0\le u_n\le\varphi$ in $\Omega_N$ implies $\partial_{x_N}u_n\ge0$ on $\omega\times\{-n,n\}$.

Observe now that, since $u_n$ is a global minimizer of $J_n$, 
\[
J_n''(u_n)(\phi, \phi) = \int_{\Omega_n}\left( |\nabla \phi |^2 - f'(u_n) \phi^2 \right)\,dx \geq 0, \quad\text{for all } \phi \in H_0^1(\Omega_n).
\]

In particular, $\lambda_1 \geq 0$, where $\lambda_1$ denotes the first eigenvalue of the operator $-\Delta - f'(u_n) $ under homogeneous Dirichlet boundary conditions. As a consequence this operator satisfies the maximum principle (see for instance~\cite{BNV}). Hence $\partial_{x_N}u_n \geq 0$ in $\Omega_n$, and equality holds only if $\partial_{x_N}u_n(x) \equiv 0$. But this is impossible since $0=u_n(x', -n) < u_n(x', n) = \varphi(x')$, and the strict inequality holds. Observe that this implies also the strict inequality in (3).

\end{proof}

It is now our intention to obtain a solution to~\ref{PB} as a limit of the functions $u_n$. This will be made by means of the Ascoli-Arzelà Theorem. However, we could obtain the trivial solutions $0$ or $\varphi$ in the limit: in order to avoid that, we make a convenient translation along the $x_N$ axis.

To this end, let us fix $z'\in\omega$ such that $\varphi(z')=\norm{\varphi}_{L^\infty(\omega)}$. Since, $u_n=0$ on $\omega\times\{-n\}$, $u_n=\varphi$ on $\omega\times\{n\}$ and $u_n$ is strictly monotone, there exists a unique $z_n\in(-n,n)$ such that
\[
u_n(z',z_n)=\frac12\varphi(z')=\frac12\norm{\varphi}_{L^\infty(\omega)}.
\]

The following lemma is a key step in passing to a limit:

\begin{lemma}
\label{lemmino}
We have that $n-z_n \to +\infty$ and $n+z_n \to +\infty$.
\end{lemma}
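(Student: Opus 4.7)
We argue by contradiction and, by symmetry, suppose that $n-z_n$ remains bounded along a subsequence; the case $n+z_n$ bounded is treated analogously (see the end of the sketch). The plan is to translate $u_n$ so that the ``halfway'' point lies at height $0$ and pass to a locally uniform limit $v$ on a half-cylinder $\omega\times(-\infty,T]$, then use the Hamiltonian identity and the Hopf lemma to derive a contradiction.

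More precisely, uniform $C^{2,\alpha}(\overline{\Omega_n})$ bounds (available since $0\le u_n\le \varphi$ and $f\in C^1$) together with the fact that $u_n$ must rise from $\tfrac12\varphi(z')$ at height $z_n$ to $\varphi(z')$ at height $n$ force $n-z_n\ge c>0$ uniformly, so, up to a subsequence, $n-z_n\to T\in[c,+\infty)$. Set $v_n(x',x_N)=u_n(x',x_N+z_n)$; since $n+z_n=2n-(n-z_n)\to+\infty$, the domain of $v_n$ exhausts $\omega\times(-\infty,T]$. A diagonal extraction and Ascoli--Arzel\`a produce $v_n\to v$ in $C^2_{\mathrm{loc}}(\overline{\omega}\times(-\infty,T])$, where $v$ solves \ref{PB} in $\omega\times(-\infty,T)$, $v=0$ on $\partial\omega\times(-\infty,T]$, $v(\cdot,T)=\varphi$, $0\le v\le\varphi$, $\partial_{x_N}v\ge0$, and $v(z',0)=\tfrac12\varphi(z')$.

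By monotonicity in $x_N$, $v(\cdot,x_N)$ converges pointwise to some $v_-$ as $x_N\to-\infty$. A standard translation argument (shifting $v$ down by $k$ and extracting a $C^2_{\mathrm{loc}}$ limit) shows that the convergence is uniform in $C^2(\overline{\omega})$, that $v_-$ is a solution of~\ref{n-1:PB} with $0\le v_-\le \varphi$, and that $\partial_{x_N}v(\cdot,x_N)\to0$ uniformly on $\overline{\omega}$. Now apply the Hamiltonian identity of Proposition~\ref{H} to $v$. Letting $t\to-\infty$ gives $H=I(v_-)\ge0$ by hypothesis~\ref{H1}. On the other hand, at $t=T$ we have $v(\cdot,T)=\varphi$, so $I(\varphi)=0$ by~\ref{H2} yields
\[
H=-\frac12\int_\omega (\partial_{x_N}v(x',T))^2\,dx'\le 0.
\]
Therefore $H=0$, $I(v_-)=0$, and crucially $\partial_{x_N}v(\cdot,T)\equiv 0$ on $\omega$.

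Set $w:=\varphi-v\ge0$ on $\omega\times(-\infty,T]$. Then $w$ satisfies $-\Delta w+c(x)w=0$ with $c(x)=-\int_0^1 f'(v+sw)\,ds$ bounded, $w=0$ on $\omega\times\{T\}$, and $w(z',0)=\tfrac12\varphi(z')>0$, so $w\not\equiv 0$. The flat portion $\omega\times\{T\}$ satisfies the interior sphere condition, so the Hopf boundary lemma (applied after absorbing the negative part of $c$ into the right-hand side using $w\ge0$) yields $\partial_{x_N}w(x',T)<0$ at every $x'\in\omega$. But $\partial_{x_N}w(\cdot,T)=-\partial_{x_N}v(\cdot,T)\equiv 0$, contradiction. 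This proves $n-z_n\to+\infty$. The case $n+z_n$ bounded is handled by the mirror argument, producing a limit $v$ on $\omega\times[-T',+\infty)$ with $v(\cdot,-T')=0$, noting that the Hamiltonian at the bottom face gives $H\le 0$ while $H=I(v_+)\ge 0$ in the limit $x_N\to+\infty$, and then Hopf applied to $v$ itself at the face $\omega\times\{-T'\}$ forces $\partial_{x_N}v(\cdot,-T')>0$, contradicting $\partial_{x_N}v(\cdot,-T')\equiv 0$.

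The main technical point is ensuring uniform (not merely local) convergence up to the lateral boundary $\partial\omega$ so that the Hamiltonian integral passes to the limit cleanly, and checking that the Hopf lemma applies with the (signed) bounded coefficient $c$; both are routine given the $C^{2,\alpha}$ bounds and the standard trick of writing $-\Delta w+c^+ w=-c^- w\le 0$.
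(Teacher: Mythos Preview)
Your argument is correct and follows essentially the same strategy as the paper: translate, pass to a limit on a half-cylinder, and use the Hamiltonian identity (Proposition~\ref{H}) to force the normal derivative to vanish on the flat face, yielding a contradiction. The only real difference is in the last step. The paper translates so that the \emph{bottom} face survives in the limit (treating $n+z_n$ bounded first); there one gets $v=0$ and $\partial_{x_N}v=0$ on $\omega\times\{0\}$, and a direct appeal to unique continuation gives $v\equiv 0$, contradicting $v(z',z_0)=\tfrac12\|\varphi\|_{L^\infty}$. You instead keep the \emph{top} face, obtain $\partial_{x_N}v(\cdot,T)\equiv 0$, and invoke the Hopf lemma on $w=\varphi-v$. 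Both work; the paper's route is slightly cleaner in that it avoids discussing the sign of the zero-order coefficient, while your Hopf argument is equally standard once one notes that $-\Delta w+(c+K)w=Kw\ge 0$ for $K$ large puts you in the nonnegative-coefficient setting. In fact, in your setup you could also simply apply unique continuation to $w=\varphi-v$ (since $w$ and $\partial_{x_N}w$ both vanish on $\omega\times\{T\}$), which would mirror the paper exactly.
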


\begin{proof}

We reason by contradiction, and we assume that $n+z_n $ is not a diverging sequence. Passing to a subsequence and using  Ascoli-Arzelà Theorem we can assume that  $n + z_n \to z_0$ and the sequence $v_n(x',x_N) = u_n(x', x_N -n)$ converges locally in $ C^{2,\alpha}$ sense to a limit function $v$ which solves the problem:
\[
\begin{cases}
		-\Delta v=f(u)&\text{in }\omega \times \R^+ ,\\
		v=0&\text{on }\partial\left(\omega \times \R^+\right)	.
	\end{cases}
\]
Moreover, $v(z',z_0) = \frac12\norm{\varphi}_{L^\infty(\omega)}$, which implies in particular that $v \not\equiv 0$.

Furthermore, we have that $0 \leq v(x', x_N) \leq \varphi(x')$ and $\partial_{x_N}v \geq 0$. By monotonicity, we can define:
\[
 \bar{v}(x') = \lim_{x_N \to + \infty} v_n(x', x_N).
 \]

We now use the invariance of the Hamiltonian to reach a contradiction. If we apply Proposition~\ref{H} to $v$ at $t=0$ we obtain:
\[
H = -\frac 1 2 \int_{\omega}( \partial_{x_N}v(x',0))^2 \, dx' \leq 0.
\]
Moreover, being $\bar v\in H^1_0(\omega)$, one has
\begin{align*} 
\lim_{t \to + \infty} H(t) &= \lim_{t \to + \infty}\int_{\omega}	 \left(	\frac 12 \left( |\nabla_{x'} v(x',t)|^2 - (\partial_{x_N}v(x',t))^2 \right) - F(v(x',t)) \right)\, dx' \\ 
	&= \int_{\omega}  \left(	\frac 12 |\nabla_{x'}\bar{v}|^2  - F(\bar{v}(x'))\right) \, dx' = I(\bar{v}) \geq 0. 
	\end{align*}

As a consequence, $ \partial_{x_N}v(x',0)=0$ for all $x' \in \omega$. By unique continuation we would obtain that $v \equiv 0$, a contradiction.

If we assume instead that $n-z_n$ is not diverging we can argue in an analogous manner.

\end{proof}

We are now in conditions to prove Theorem~\ref{tmain}.

\begin{proof}[Proof of Theorem~\ref{tmain}]
	
Define $\hat{u}_n(x', x_N) = u_n(x', x_N - z_n)$, so that $\hat{u}_n(z',0)= \frac 1 2 \| \varphi \|_{L^{\infty}}$. Passing to a subsequence and using Ascoli-Arzelà Theorem, and taking into account Lemma~\ref{lemmino}, we conclude that $\hat{u}_n \to u$ in $ C^{2,\alpha}_{loc}(\Omega)$ sense, where $u$ is a solution of problem~\ref{PB}. By the pointwise convergence of the derivatives we have that $\partial_{x_N} u \geq 0$ This monotonicity allows us to define:
\[
 \underline{u}(x')= \lim_{x_N \to - \infty} u(x', x_N),\quad \overline{u}(x')= \lim_{x_N \to + \infty} u(x', x_N).
 \]

Clearly, $0 \leq \underline{u} <\overline{u} \leq \varphi$ are solutions of the problem:
\begin{equation*}
	\begin{cases}
		-\Delta_{x'}\phi=f(\phi)&\text{in }\omega,\\
		\phi=0&\text{on }\partial\omega.
	\end{cases}
\end{equation*}
Moreover, by pointwise convergence, 
\begin{equation} 
\label{ineq}
\underline{u}(z') \leq {u}(x', 0) =  \frac 1 2  \varphi (z') \leq \overline{u}(z').
\end{equation}

We now make use of the Hamiltonian associated to $u$. Observe that since the Hamiltonian values $H_n$ are negative for all $n \in \N$, passing to the limit we obtain that $H \leq 0$. Then:
\begin{align*}  
0 \geq H &= \lim_{t \to +\infty} \int_{\omega}		 \left(	\frac 12 \left( |\nabla_{x'} u(x',t)|^2 - (\partial_{x_N}u(x',t))^2 \right) - F(u(x',t))	\right) \, dx' \\ 
&=\int_{\omega}	\left(	\frac 12  |\nabla_{x'} \overline{u}(x')|^2   - F(\overline{u}(x'))	\right) \, dx' = I(\overline{u}) \geq 0. 
\end{align*}

Analogously we also obtain that $I(\underline{u}) =0$. By the minimality of $\varphi$ (recall Lemma~\ref{minimal}) and~\ref{ineq} we conclude that $\underline{u}=0$ and $\overline{u}= \varphi$. 

Finally, observe that $\partial_{x_N} u$ is a weak solution of:
\[
- \Delta (\partial_{x_N} u) = f'(u) \partial_{x_N} u.
\]
The maximum principle applies and we obtain that $\partial_{x_N}u >0$.

\end{proof}

\section{On the assumptions~\ref{H1} and~\ref{H2}}
\label{sec3}

As we have shown in the previous section, Theorem~\ref{tmain} holds under the hypotheses~\ref{H1} and~\ref{H2}. In this section our intention is to shed some light on those assumptions. First, we show that we can always find functions $f$ so that these assumptions are satisfied. Second, we show that under~\ref{H1} and~\ref{H2} there exists a positive minimal solution of~\ref{n-1:PB}. Finally, we will see that such assumptions are rather natural: indeed, the existence of a solution as given by Theorem~\ref{tmain} implies that $0$, $\varphi$ are stable solutions of the problem~\ref{n-1:PB} at the same energy level.

\begin{prop}
\label{example}
Given a smooth and bounded domain $\omega\subseteq\R^{N-1}$, we can find a smooth odd function $f\colon\R \to \R$ such that assumptions~\ref{H1} and~\ref{H2} hold true.
\end{prop}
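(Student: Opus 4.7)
The plan is to construct $f$ via a variational threshold argument: I fix a convenient odd template $g$, rescale it by a parameter $\lambda$, and then select $\lambda=\lambda^*$ at the critical value where the energy $I_\lambda$ first admits a nontrivial global minimizer alongside $0$.

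First I would pick an odd $g\in C^\infty(\R)$ supported in $[-R,R]$, with $g\equiv 0$ on $[-\delta,\delta]$ for some $0<\delta<R$ and $g>0$ on $(\delta,R)$; then $G(u)=\int_0^u g(s)\,ds$ is even, nonnegative, bounded, and vanishes exactly on $[-\delta,\delta]$. Set $f_\lambda:=\lambda g$ and $I_\lambda(u):=\tfrac12\int_\omega|\nabla_{x'}u|^2\,dx'-\lambda\int_\omega G(u)\,dx'$. Since $G$ is bounded, $I_\lambda$ is coercive on $H_0^1(\omega)$, so $\alpha(\lambda):=\inf I_\lambda$ is finite and the direct method yields a minimizer $u_\lambda$. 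One has $\alpha(\lambda)\leq I_\lambda(0)=0$, and $\alpha$ is concave in $\lambda$ (as an infimum of affine functions), hence continuous, and nonincreasing since $G\geq 0$.

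Next I would locate the threshold. For small $\lambda$ the Sobolev embedding $H_0^1(\omega)\hookrightarrow L^q(\omega)$ with some $q>2$ gives $|\{|u|>\delta\}|\leq\delta^{-q}\|u\|_q^q\leq C\|\nabla u\|_2^q$, so $\int_\omega G(u)\,dx'\leq\|G\|_\infty\min(|\omega|,C\|\nabla u\|_2^q)$. Balancing the small-norm estimate $I_\lambda(u)\geq\tfrac12\|\nabla u\|_2^2(1-2\lambda C\|\nabla u\|_2^{q-2})$ against the large-norm estimate $I_\lambda(u)\geq\tfrac12\|\nabla u\|_2^2-\lambda\|G\|_\infty|\omega|$ gives $I_\lambda\geq 0$ on $H_0^1(\omega)$ for $\lambda$ small, so $\alpha(\lambda)=0$ there. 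Conversely, any plateau-type test function $\phi\in H_0^1(\omega)$ with $\max\phi>\delta$ satisfies $\int G(\phi)\,dx'>0$, forcing $I_\lambda(\phi)\to-\infty$ as $\lambda\to+\infty$. Thus $\lambda^*:=\sup\{\lambda\geq 0:\alpha(\lambda)=0\}\in(0,+\infty)$, and by continuity $\alpha(\lambda^*)=0$.

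The delicate step is exhibiting a nontrivial minimizer at $\lambda^*$. I would pick a sequence $\lambda_k\downarrow\lambda^*$ and minimizers $u_k$ of $I_{\lambda_k}$ with $I_{\lambda_k}(u_k)=\alpha(\lambda_k)<0$; replacing $u_k$ by $|u_k|$ (valid since $G$ is even) lets me assume $u_k\geq 0$. The maximum principle applied to $-\Delta u_k=\lambda_k g(u_k)$, using that $g$ vanishes outside $[-R,R]$, gives $\|u_k\|_\infty\leq R$, and Schauder estimates provide uniform $C^{2,\alpha}(\overline{\omega})$ bounds; along a subsequence $u_k\to\psi$ in $C^2$, where $\psi\geq 0$ solves $-\Delta\psi=\lambda^* g(\psi)$ with $I_{\lambda^*}(\psi)=\lim I_{\lambda_k}(u_k)=0$. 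Crucially $\psi\not\equiv 0$: otherwise $u_k\to 0$ uniformly, so $\|u_k\|_\infty<\delta$ eventually, forcing $g(u_k)\equiv 0$ and $u_k$ to be harmonic with zero boundary data, contradicting $u_k\neq 0$. The strong maximum principle then yields $\psi>0$ in $\omega$, proving~\ref{H2}. For~\ref{H1}, any $u\in H_0^1(\omega)\setminus\{0\}$ with $I_{\lambda^*}(u)\leq 0$ satisfies $\tfrac12\|\nabla u\|_2^2\leq\lambda^*\int G(u)\,dx'\leq C\lambda^*\|\nabla u\|_2^q$ with $q>2$, bounding $\|\nabla u\|_2$ away from zero uniformly and thereby isolating $0$ among minimizers. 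Setting $f:=\lambda^* g$ produces the desired smooth odd nonlinearity; the main obstacle is the non-triviality of $\psi$ at the limit, which is precisely what the choice $g\equiv 0$ on $[-\delta,\delta]$ was designed to guarantee.
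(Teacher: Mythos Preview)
Your proof is correct and follows the same overall strategy as the paper: introduce a one-parameter family of odd nonlinearities, locate the threshold value $\lambda^*$ at which the global infimum of $I_\lambda$ transitions from $0$ to negative values, and extract a nontrivial positive minimizer at $\lambda^*$ as a limit of minimizers for nearby parameters. The difference lies only in the template. The paper takes the explicit polynomial $f_\lambda(t)=t^3-\lambda t^5$, so that $I_\lambda''(0)[\phi]=\int_\omega|\nabla\phi|^2$ is positive definite uniformly in $\lambda$; the isolation of $0$ and the nontriviality of the limit then require a short regularity/compactness argument together with a maximum-principle bound. Your choice of a compactly supported odd $g$ vanishing on $[-\delta,\delta]$ is slightly less explicit but buys you cleaner bookkeeping: the $L^\infty$ bound $\|u_k\|_\infty\le R$ is immediate from the support of $g$, the isolation of $0$ follows from the Sobolev/Chebyshev estimate with exponent $q>2$, and the nontriviality of $\psi$ reduces to the trivial observation that a harmonic function with zero boundary data vanishes. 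Both routes are equally valid realizations of the same threshold idea.
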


\begin{proof}

The function $f$ claimed in Proposition~\ref{example} will have the form:
\[
f_\lambda(t)= t^3-\lambda t^5,
\]
for some $\lambda >0$ to be found. Clearly its primitive is:
\[
F_\lambda(t)=\int_0^tf_\lambda(s)\,ds= \frac{t^4}{4}-\lambda \frac{t^6}{6}.
\]
Furthermore, we define $I_\lambda\colon H_0^1(\omega) \to (-\infty, +\infty]$,
\[
I_\lambda(\psi)=\int_\omega\left(\frac12\abs{\nabla_{x'}\psi}^2-F_\lambda(\psi)\right)\,dx'= \int_\omega\left(\frac12\abs{\nabla_{x'}\psi}^2  - \frac 1 4 \psi^4 + \frac{\lambda}{6}  \psi^6 \right)\,dx',
\]
and
\[
m_\lambda=\inf_{H^1_0(\omega)}I_\lambda.
\]
Let us point out that $I_\lambda(0)=0$ and then $m_\lambda\le0$, for all $\lambda\ge0$.
Moreover, observe that for any $\lambda$, $0$ is a critical point of $I_\lambda$ and
\begin{equation}
\label{sec:der:Iambda}
I_{\lambda}''(0)[\phi]=\int_\omega\abs{\nabla\phi}^2\,dx',\quad\text{ for all }\phi\in H^1_0(\omega),
\end{equation}
that means it is an isolated, uniformly in $\lambda$, local minimizer.

The proof will be done in several steps.\\

\emph{Step 1: For any $\lambda>0$ the infimum $m_\lambda$ is finite and it is achieved. Moreover, $m_{\lambda} \leq m_{\lambda'}$ if $\lambda \leq \lambda'$.}

For all $\lambda>0$ we have that $F_{\lambda}(t) \geq k_\lambda$ for some constant $k_\lambda \in \R$. Therefore, 
\begin{equation}
\label{coercivity}
I_\lambda(\psi)\ge\frac12\norm{\psi}_{H^1_0(\omega)}^2 - k_{\lambda} \abs{\omega},
\end{equation}
and coercivity easily follows. Then the weak lower semicontinuity is a consequence of the fact that $F_\lambda\ge k_\lambda$ and Fatou's Lemma. Finally, the monotonicity of $m_\lambda$ with respect to $\lambda$ is immediate from its definition.

\medskip 

\emph{Step 2: The set $\mathcal{E}= \set{\lambda>0:m_\lambda<0}$ is not empty and bounded from above; we denote by $\lambda^*$ its supremum.}

We first show that the $\mathcal{E}$ is not empty. Indeed, take any $\phi \in C_0^1(\omega)$; it is clear that for sufficiently large $t >0$, we have that:
\[
\frac{1}{2}\int_\omega\abs{\nabla_{x'}(t \phi)}^2\,dx'- \frac 1 4 \int_\omega (t\phi)^4\,dx'= \frac{t^2}{2}\int_\omega\abs{\nabla_{x'}\phi}^2\,dx'- \frac{t^4}{4} \int_\omega \phi^4\,dx'<-1.
\]

We can take $\lambda>0$ small enough so that $\lambda \frac{t^6}{6} \int_{\omega} \phi^6 <1$ to conclude that $I_{\lambda}(t \phi)<0$. 

Moreover, by the Poincaré inequality we can estimate $I_{\lambda}$ from below as:
\begin{equation}
\label{poincare}
I_\lambda(\psi) \geq \int_\omega \left( \frac{C_{\omega}}{2} \psi^2- \frac 1 4 \psi^4 + \frac{\lambda}{6}  \psi^6 \right)\,dx' .
\end{equation}

Observe that if $\lambda$ is sufficiently large, the function $ t \mapsto  \frac{C_{\omega}}{2} t^2- \frac 1 4 t^4 + \frac{\lambda}{6}  t^6$ is nonnegative, and hence $m_{\lambda}=0$.

\medskip 
\emph{Step 3:~\ref{H1} and~\ref{H2} are satisfied for $\lambda=\lambda^*$.}

We start by pointing out that $m_{\lambda^*}=0$ by the definition of $\lambda^*$.
Consider a sequence $(\lambda_n)_{n\in\N}\subseteq\mathcal{E}$ such that $\lambda_n\to\lambda^*$ as $n\to+\infty$. Then there exists $(\phi_{\lambda_n})_{n\in\N}\subseteq H^1_0(\Omega)$ such that $I_{\lambda_n}(\phi_{\lambda_n})=m_{\lambda_n}<0$ for all $n\in\N$. 
 First of all, let us show that, without loss of generality, we can assume $\phi_{\lambda_n}\ge0$ in $\omega$. Indeed, since $\abs{\phi_{\lambda_n}}\in H^1_0(\omega)$ and $\abs{\nabla\abs{\phi_{\lambda_n}}}=\abs{\nabla\phi_{\lambda_n}}$ a.e. in $\omega$ one has
\begin{align*}
m_{\lambda_n}\le I_{\lambda_n}(\abs{\phi_{\lambda_n}})
	= I_{\lambda_n}(\phi_{\lambda_n})=m_{\lambda_n},
\end{align*}
proving that we can assume $\phi_{\lambda_n}\ge0$ in $\omega$.

Now, taking into account that $\lambda_n\to\lambda^*$, the inequality~\ref{coercivity} tells us that $\norm{\phi_{\lambda_n}}_{H^1_0(\omega)}\le C$ for some constant $C>0$ and for all $n\in\N$. Then, up to subsequences, $\phi_{\lambda_n}\wto\phi^*$ weakly in $H^1_0(\omega)$. We now claim that $\phi^* \neq 0$.
Indeed, let us assume, by contradiction, that $\phi^*\equiv0$. In particular, by Sobolev’s embedding theorem, one has $\phi_{\lambda_n}\to0$ strongly in $L^2(\omega)$. Now, it is trivial to see that for all $\lambda>0$ there exists $C_\lambda>0$ such that $f_\lambda(t)\le C_\lambda$ for all $t\ge0$. In particular $f_{\lambda_n}(t)\le 2C_{\lambda^*}$ for all $t\ge0$ and for all $n\in\N$ large enough. Then, if we let $\bar\phi\in H^1_0(\omega)$ be the solution of
\[
\begin{cases}
-\Delta\bar\phi=2C_{\lambda^*}&\text{in }\omega,\\
\bar\phi=0&\text{on }\partial\omega,
\end{cases}
\]
it is an easy consequence of the maximum principle the fact that $0\le\phi_{\lambda_n}\le\bar\phi$ in $\omega$ for all $n\in\N$ large enough. In particular $0\le\phi_{\lambda_n}\le\bar c$ in $\omega$ for all $n\in\N$ large enough, for some positive constant $\bar c\in\R$. Since $\phi_{\lambda_n}$ are solutions of 
\[
\begin{cases}
-\Delta_{x'}\phi=f_{\lambda_n}(\phi)&\text{in }\omega,\\
\phi=0&\text{on }\partial\omega,
\end{cases}
\]
by classical regularity theory there is a constant $C=C(\omega)>0$ such that
\[
\norm{\phi_{\lambda_n}}_{H^1_0(\omega)}\le C\norm{f(\phi_{\lambda_n})}_{L^2(\omega)}\le C\mathrm{Lip}(f,[0,\bar c])\norm{\phi_{\lambda_n}}_{L^2(\omega)}\to0,
\]
as $n\to+\infty$, but this is in contrast with the fact that $0$ is an isolated, uniformly in $n$, local minimizer of $I_{\lambda_n}$.

Now, since $\phi_{\lambda_n}\to\phi^*$ in $L^2(\omega)$ and $\phi_{\lambda_n}$ is a uniformly bounded sequence, we have that $\phi_{\lambda_n}\to\phi^*$ in $L^p(\omega)$ for any $1<p< +\infty$. In particular, this shows that $I_{\lambda^*} (\phi^*)=0$.

Furthermore, we have that $\phi^*\ge0$ in $\omega$ is a weak solution of the problem: 
\begin{equation}
\label{PBeps*}
\begin{cases}
-\Delta_{x'}\phi=f_{\lambda^*}(\phi)&\text{in }\omega,\\
\phi=0&\text{on }\partial\omega.
\end{cases}
\end{equation}
By classical regularity theory $\phi^*\in C^{2, \alpha}(\omega)$. We can rewrite~\ref{PBeps*} as $-\Delta_{x'}\phi^*+q(x')\phi^*=0$ in $\omega$, where $q$ is defined as:
\[
q= \begin{cases}f_{\lambda^*}(\phi^*)/\phi^* & \mbox{ if }\phi^*\not=0, \\ 0 & \mbox{ elsewhere.} \end{cases}  
\]
 Since $f \in  C^1$ and $f(0)=0$,  $q \in L^{\infty}(\omega)$ and the maximum principle can be applied to show that $\phi^*>0$ in  $\omega$ and the proof of~\ref{H2} is complete.

Finally,~\ref{H1} is an immediate consequence of~\ref{sec:der:Iambda} and the fact that $m_{\lambda^*}=0$.

\end{proof}

\begin{lemma} \label{minimal} Assume that $f \in  C^1(\R)$ is such that~\ref{H1},~\ref{H2} are satisfied. Then, there exists $\varphi \in  C^{2, \alpha}(\omega)$ a positive solution of~\ref{n-1:PB} such that $I(\varphi) =0$, and if $\psi\in H_0^1(\omega)$, $\psi>0$, satisfies $I(\psi)=0$, then $\varphi \leq \psi$. \end{lemma}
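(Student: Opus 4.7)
The plan is to build $\varphi$ as the pointwise infimum over the family of positive minimizers lying below a fixed one. The engine is the following stability observation: if $\psi_1,\psi_2 \in H_0^1(\omega)$ are positive and satisfy $I(\psi_1) = I(\psi_2) = 0$, then $\min\{\psi_1,\psi_2\}$ and $\max\{\psi_1,\psi_2\}$ are again positive minimizers. Indeed, the lattice identity for Sobolev functions together with the pointwise equality $\{\min,\max\} = \{\psi_1,\psi_2\}$ gives
\[
I(\min\{\psi_1,\psi_2\}) + I(\max\{\psi_1,\psi_2\}) = I(\psi_1) + I(\psi_2) = 0,
\]
and since~\ref{H1} forces $I \geq 0$ globally, both summands must vanish; positivity is obviously inherited.

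Fix any $\psi_0$ supplied by~\ref{H2} and set $\mathcal{F}_0 = \{\psi \in H_0^1(\omega) : \psi > 0,\ I(\psi)=0,\ \psi \leq \psi_0\}$. By the separability of $L^2(\omega)$, and since $\mathcal{F}_0$ is closed under finite minima by the previous observation, one can extract a decreasing sequence $\tilde\psi_n \in \mathcal{F}_0$ converging pointwise a.e.\ to $\varphi := \essinf_{\eta \in \mathcal{F}_0} \eta$. Since $0 \leq \tilde\psi_n \leq \psi_0 \in L^\infty(\omega)$, dominated convergence gives $\tilde\psi_n \to \varphi$ in every $L^p(\omega)$, and the identity $\tfrac12\|\nabla\tilde\psi_n\|_{L^2}^2 = \int_\omega F(\tilde\psi_n)$ (extracted from $I(\tilde\psi_n) = 0$) yields a uniform $H_0^1$ bound, so $\tilde\psi_n \rightharpoonup \varphi$ weakly in $H_0^1(\omega)$. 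Weak lower semicontinuity combined with~\ref{H1} then forces $I(\varphi) = 0$. Standard elliptic bootstrap gives $\varphi \in C^{2,\alpha}(\omega)$, and, provided $\varphi \not\equiv 0$, the strong maximum principle applied to $-\Delta\varphi + q\varphi = 0$ with $q$ bounded (possible since $f \in C^1$ and $f(0)=0$) yields $\varphi > 0$ in $\omega$, placing $\varphi \in \mathcal{F}_0$.

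The main obstacle is precisely the non-triviality $\varphi \not\equiv 0$, which is where the \emph{isolation} part of~\ref{H1} enters essentially: if $\varphi \equiv 0$, then the $L^p$ convergence $\tilde\psi_n \to 0$ combined with $\tfrac12\|\nabla\tilde\psi_n\|_{L^2}^2 = \int_\omega F(\tilde\psi_n) \to 0$ forces $\tilde\psi_n \to 0$ strongly in $H_0^1(\omega)$, contradicting that each $\tilde\psi_n$ is a nonzero global minimizer sitting arbitrarily close to the isolated minimizer $0$. For the universal minimality, given any positive minimizer $\psi$, the function $\min\{\psi,\psi_0\}$ is again a positive minimizer by the stability observation and is dominated by $\psi_0$, hence lies in $\mathcal{F}_0$; the defining property of the essential infimum then yields $\varphi \leq \min\{\psi,\psi_0\} \leq \psi$ a.e.\ on $\omega$, and continuity upgrades this to a pointwise inequality on $\overline{\omega}$.
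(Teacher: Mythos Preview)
Your argument is correct, and it takes a genuinely different route from the paper's. Both proofs start from the same lattice observation---that $I(\min\{\psi_1,\psi_2\})+I(\max\{\psi_1,\psi_2\})=I(\psi_1)+I(\psi_2)$, so the min and max of two positive minimizers are again positive minimizers---but they diverge from there. The paper first upgrades this to a \emph{total ordering} of the set of positive minimizers: since $\xi=\min\{\phi,\psi\}$ is itself a $C^{2,\alpha}$ solution, one applies the strong maximum principle to $\phi-\xi\geq 0$ to conclude that either $\phi\leq\psi$ or $\phi>\psi$. With this structural fact in hand, the paper then selects $\varphi$ by minimizing $\|\psi\|_{L^\infty}$ over the set $\mathcal{M}$ of positive minimizers, passing to the limit via Schauder compactness; minimality follows because any $\psi<\varphi$ would beat $\alpha=\inf\|\cdot\|_{L^\infty}$.

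You bypass the ordering step entirely and go straight for the essential infimum of $\mathcal{F}_0$, exploiting only closure under finite minima and a separability argument to produce a decreasing approximating sequence. Your compactness is softer (weak $H_0^1$ rather than $C^{2,\alpha}$), and the identification $I(\varphi)=0$ comes from lower semicontinuity rather than uniform convergence. The non-triviality step is essentially identical in spirit to the paper's (strong $H_0^1$ convergence to $0$ contradicts isolation), and your final minimality argument---pulling an arbitrary positive minimizer $\psi$ into $\mathcal{F}_0$ via $\min\{\psi,\psi_0\}$---is neat. What the paper's approach buys is the extra structural statement that positive minimizers are totally ordered, which is of some independent interest; what your approach buys is economy, since you never need to invoke the maximum principle to compare two distinct minimizers.
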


\begin{proof}
	
The proof is done in two steps.

\medskip 	

\emph{Step 1: if $\phi,\psi\in H^1_0(\omega)$ satisfy $I(\phi)=I(\psi)=0$ then one of the following is verified
	\[
	\phi<\psi,\quad\text{or}\quad\phi\equiv\psi,\quad\text{or}\quad\psi>\phi,
	\]
	that is the set of minima of $I$ is totally ordered.
}

Let us define $\xi=\min\{\phi,\psi\},\eta=\max\{\phi,\psi\}\in H^1_0(\omega)$. Hence $I(\xi)\ge0$, $I(\eta)\ge0$ and
\[
0\le I(\xi)+I(\eta)=I(\phi)+I(\psi)=0.
\]
We can then infer that $I(\xi)=I(\eta)=0$. In particular, $\xi$ is a solution of problem~\ref{PBeps*} and then by classical regularity theory $\xi\in C^{2,\alpha}(\omega)$. Let us now consider the function $\Xi=\phi-\xi$ that satisfies
\[
\begin{cases}
	-\Delta_{x'}\Xi+q(x')\Xi=0&\text{in }\omega,\\
	\Xi\ge0&\text{in }\omega,
\end{cases}
\]
where
\[
q(x')=
\begin{cases}
	\frac{f(\phi(x'))-f(\xi(x'))}{\phi(x')-\xi(x')},&\text{if }x'\in\{\phi>\psi\},\\
	0,&\text{if }x'\in\{\phi\le\psi\}.
\end{cases}
\]
Since $q\in L^\infty(\omega)$, we can apply the Maximum Principle for nonnegative solutions to infer that $\Xi>0$ in $\omega$ or $\Xi\equiv0$ in $\omega$. If the first case occurs we deduce $\phi>\psi$ in $\omega$, while if the second one is verified it follows $\phi\le\psi$ in $\omega$. 

We can argue in the same way with $\eta$, to conclude that either  $\phi<\psi$ in $\omega$ or $\phi\ge\psi$ in $\omega$. 

\medskip

\emph{Step 2: there exists $\varphi\in C^{2, \alpha}(\omega)$ such that $I(\varphi)=0$, $\varphi>0$ in $\omega$ and if $\psi\in H^1_0(\omega)$, $\psi>0$ satisfies $I_\omega(\psi)=0$ then one of the following is verified
	\[
	\psi\equiv\varphi,\quad\text{or}\quad\psi>\varphi.
	\]}

We can now define $\mathcal M=\set{\psi\in H^1_0(\omega): I(\psi)=0,\quad \psi>0}$, which is not empty thanks to~\ref{H2}, and:
\[
 \alpha = \inf \{ \| \psi\|_{L^{\infty(\omega)}}: \ \psi \in \mathcal{M}\} \geq 0.
 \]

Take $\psi_n \in \mathcal{M}$ with $\| \psi_n \|_{L^{\infty}} \to \alpha$. Since $\psi_n$ is a sequence of uniformly bounded solutions to~\ref{n-1:PB}, Schauder estimates imply that $\psi_n$ is uniformly bounded in $ C^{2, \alpha}(\omega)$. Up to a subsequence we have that $\psi_n \to \varphi$ in  $ C^{2, \alpha}$ sense, and $I(\varphi)=0$. We claim that $\varphi$ is the desired minimal solution.

Since $0$ is an isolated minimizer by~\ref{H1}, we conclude that $\varphi \not \equiv 0$, $\varphi \geq 0$. We argue again as before: we can rewrite~\ref{PBeps*} as $-\Delta_{x'}\varphi+q(x')\varphi=0$ in $\omega$, where $q$ is defined as:
\[
q=\begin{cases} f(\varphi)/\varphi & \mbox{ if }\varphi\not=0, \\ 0 & \mbox{ elsewhere.} \end{cases} 
\]
 Since $f \in  C^1$ and $f(0)=0$,  $q \in L^{\infty}(\omega)$ and the maximum principle can be applied to show that $\varphi>0$ in  $\omega$. In other words, $\varphi \in \mathcal{M}$.

Take now an arbitrary function $\psi\in \mathcal{M}$. By Step 1, we only need to exclude $\psi < \varphi$. But, in such case,
\[
 \|\psi\|_{L^{\infty}(\omega)} < \| \varphi \|_{L^{\infty}(\omega)} = \alpha,
 \]
which is a contradiction with the definition of $\alpha$.

\end{proof}

We now show that, to some extent, the assumptions~\ref{H1} and~\ref{H2} are necessary for Theorem~\ref{tmain} to hold.

\begin{prop}
\label{Hi-nec}
 Assume that $f \in  C^1(\R)$ is so that there exists $u\in C^{2, \alpha}(\overline\Omega)$ solution of~\ref{PB} such that:
\begin{enumerate}
	\item $\displaystyle\lim_{x_N\to-\infty}u(\cdot,x_N)=0$ and $\displaystyle\lim_{x_N\to+\infty}u(\cdot,x_N)=\varphi(\cdot)$,
	uniformly in $x'\in\omega$.
	\item $u$ is strictly increasing in $x_N$, that is 
	\[
\partial_{x_N} u>0\quad\text{in }\Omega.\]
\end{enumerate}

Then, $0$ and $\varphi$ are solutions of~\ref{n-1:PB} with $I(0) = I(\varphi)$. Moreover, $I''(0)$ and $I''(\varphi)$ are semipositive definite forms.

\end{prop}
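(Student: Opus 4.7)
The plan is to proceed in three steps, mirroring the three claims of the proposition.

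\textbf{Step 1 (the limits solve \ref{n-1:PB}).} Since $u\in C^{2,\alpha}(\overline\Omega)$ is bounded and $f\in C^1$, standard Schauder estimates up to the lateral boundary give uniform $C^{2,\alpha}$ bounds for $u$ on every slab $\omega\times[t-1,t+1]$, with constants independent of $t\in\R$. Combining this with the hypothesis that $u(\cdot,x_N)\to 0$ (resp.\ $\varphi$) uniformly on $\omega$ as $x_N\to-\infty$ (resp.\ $+\infty$), Arzel\`a--Ascoli upgrades those limits to $C^2_{loc}$ convergence of the translates $u(\cdot,\cdot+t_n)$ to the constant $0$ and to $\varphi$, respectively. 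Passing to the limit in $-\Delta u=f(u)$ yields $f(0)=0$ and $-\Delta_{x'}\varphi=f(\varphi)$ in $\omega$, so both $0$ and $\varphi$ solve~\ref{n-1:PB}. In particular $\partial_{x_N}u(\cdot,x_N)\to 0$ and $\nabla_{x'}u(\cdot,x_N)\to 0,\nabla_{x'}\varphi$ uniformly on $\omega$.

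\textbf{Step 2 (equal energy).} I would apply Proposition~\ref{H} with $A=\R$: the Hamiltonian
\[
H(t)=\int_{\omega}\left(\tfrac12|\nabla_{x'}u(x',t)|^2-\tfrac12(\partial_{x_N}u(x',t))^2-F(u(x',t))\right)dx'
\]
is independent of $t$. Using the uniform $C^2$ limits from Step~1 and $F(0)=0$, sending $t\to-\infty$ gives $H=0=I(0)$, while sending $t\to+\infty$ gives $H=I(\varphi)$. Hence $I(\varphi)=I(0)=0$.

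\textbf{Step 3 (semipositivity).} First I would establish stability of $u$ on $\Omega$. Since $w:=\partial_{x_N}u>0$ in $\Omega$ solves $-\Delta w=f'(u)w$, for every $\phi\in C_c^\infty(\Omega)$ the function $\psi=\phi/w$ lies in $H_0^1(\Omega)$ with compact support (as $w\ge c>0$ on $\supp\phi$); a standard integration by parts using $-\Delta w=f'(u)w$ gives
\[
\int_{\Omega}\left(|\nabla\phi|^2-f'(u)\phi^2\right)dx=\int_{\Omega}w^2|\nabla\psi|^2\,dx\ge 0.
\]
Next I transfer this stability to $\varphi$ by a cutoff argument. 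Fix $\phi\in C_c^\infty(\omega)$ and take $\chi_R\in C_c^\infty(\R)$ with $\int_\R\chi_R^2=1$ and $\int_\R(\chi_R')^2\to 0$ as $R\to+\infty$ (e.g.\ $\chi_R(s)=R^{-1/2}\chi(s/R)$ with $\|\chi\|_{L^2(\R)}=1$). Choosing $t_R\to+\infty$ fast enough (using the uniform-in-$x'$ convergence of $u(\cdot,x_N)$ to $\varphi$), we have $f'(u(x',y+t_R))\to f'(\varphi(x'))$ uniformly for $(x',y)\in\omega\times\supp\chi_R$. Testing the stability of $u$ on $\Phi_R(x',x_N)=\phi(x')\chi_R(x_N-t_R)$ and separating variables by Fubini yields
\[
\int_{\omega}|\nabla_{x'}\phi|^2\,dx'+\Big(\!\int_\R(\chi_R')^2\Big)\!\int_{\omega}\phi^2\,dx'-\int_{\omega}f'(\varphi)\phi^2\,dx'+o(1)\ge 0,
\]
and letting $R\to+\infty$ gives $I''(\varphi)[\phi,\phi]\ge 0$. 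Repeating with $t_R\to-\infty$ produces $I''(0)[\phi,\phi]\ge 0$, completing the proof.

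The main delicate point is Step~3: one must balance two competing scales in the cutoff $\chi_R(\cdot-t_R)$, choosing $t_R$ so that (in a quantitative way compatible with the modulus of uniform convergence of $u$ to its limits) the error from replacing $f'(u)$ by $f'(\varphi)$ on the support of $\chi_R(\cdot-t_R)$ is $o(1)$, while simultaneously $\int_\R(\chi_R')^2\to 0$.
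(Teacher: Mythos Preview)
Your proof is correct and follows the same strategy as the paper: the Hamiltonian identity of Proposition~\ref{H} for the equality $I(0)=I(\varphi)$, and stability of $u$ via the positive solution $\partial_{x_N}u$ of the linearized equation, passed to the limits $0$ and $\varphi$. The only difference is that where the paper cites~\cite{BNV} for stability and~\cite[Lemma~3.1]{AC00} for its inheritance by the limits, you supply self-contained arguments (the Picone-type identity with $\psi=\phi/w$, and the scaling cutoff $\chi_R(\cdot-t_R)$), which is fine.
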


\begin{proof}
We make use of the invariance of the Hamiltonian (Proposition~\ref{H}) to the function $u$:
\begin{multline*}
H =  \lim_{t \to \pm \infty}\int_{\omega}	\left(	\frac 12 \left( |\nabla_{x'} u(x',t)|^2 - (\partial_{x_N}u(x',t))^2 \right) - F(u(x',t))	\right) \, dx'
 \\ \longrightarrow   \left  \{ \begin{array}{ll} I(0) & \mbox{ if } t \to - \infty, \\ I(\varphi) & \mbox{ if } t \to + \infty. \end{array} \right.
\end{multline*}

Now, observe that $\partial_{ x_N} u$ is a positive solution to the problem:
\[
\begin{cases}
	-\Delta (\partial_{ x_N} u)=f'(u)\partial_{ x_N} u&\text{in }\Omega,\\
	\partial_{ x_N} u=0&\text{on }\partial\Omega.
\end{cases}
\]
As a consequence (see for instance \cite{BNV}) $u$ is stable, meaning that:
\[
Q(\phi) = \int_{\Omega}\left( |\nabla \phi|^2 - f'(u) \phi^2\right)\,dx \geq 0, \quad \text{for all } \phi \in H_0^1(\Omega).
\]

It is well known (see for instance~\cite[Lemma 3.1]{AC00}) that such property is inherited by the upstream and downstream limits $\varphi$ and $0$, respectively. In other words, $I''(0)$ and $I''(\varphi)$ are semipositive definite.

\end{proof}

\section{Applications}
\label{sec4}

As we mentioned in the introduction, the initial motivation of our study was to show the existence of solutions to semilinear elliptic problems without critical points. Indeed, we can prove the following result.

\begin{thm} \label{t.nocrit} There exists $f\colon \R \to \R$ a smooth function and $u$ a bounded smooth solution to the problem:
\begin{equation}
	\label{PB2}
	\begin{cases}
		-\Delta u=f(u)&\text{in }\Omega,\\
		u=0&\text{on }\partial\Omega,
	\end{cases}
\end{equation}
where $\Omega \subseteq \R^2$ is either the strip $(0,1) \times \R$, the half-plane $\R^+ \times \R$, or the whole plane $\Omega = \R^2$ (in this last case the boundary condition is void). 

Moreover $u$ is not a 1D solution (i.e., some level sets are not formed by straight lines), $\nabla u \in L^{\infty}(\Omega)^2$ and $\nabla u(x) \neq 0 $ for all $x \in \overline{\Omega}$.
\end{thm}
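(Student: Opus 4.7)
The plan is to apply Theorem~\ref{tmain} with $N=2$ and $\omega=(0,1)$, using an odd nonlinearity $f\in C^1(\R)$ as produced by Proposition~\ref{example} (the function $f_{\lambda^*}(t)=t^3-\lambda^* t^5$ is odd). This immediately yields a bounded $C^{2,\alpha}$ solution $u$ on the strip $\Omega_s=(0,1)\times\R$ with $0<u<\varphi$, $\partial_{x_2}u>0$ in $\Omega_s$, and $u=0$ on $\partial\Omega_s$. This gives the strip case, provided we verify $\nabla u\ne 0$ also on $\partial\Omega_s$. On the two vertical boundary lines we have $u\equiv 0$, hence $\partial_{x_2}u\equiv 0$; but writing the equation as $-\Delta u - c(x)u=0$ with the bounded coefficient $c(x)=f(u(x))/u(x)$ (extended to $f'(0)$ on $\{u=0\}$, which is continuous since $f\in C^1$ and $f(0)=0$), the Hopf boundary lemma gives $\partial_{x_1}u>0$ on $\{x_1=0\}$ and $\partial_{x_1}u<0$ on $\{x_1=1\}$.

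For the whole plane, I would extend $u$ by iterated odd reflections across $\{x_1=0\}$ and $\{x_1=1\}$ to a $2$-periodic function $\tilde u$ on $\R^2$. Since $f$ is odd, $\tilde u$ is a weak solution of $-\Delta\tilde u=f(\tilde u)$ in $\R^2$; $C^{2,\alpha}$ matching across the reflection lines $\{x_1=k\}$ can be checked directly by noting that on such a line $u\equiv 0$ gives $\partial_{x_2}^2 u=0$, so the equation forces $\partial_{x_1}^2 u=-f(0)-\partial_{x_2}^2 u=0$, ensuring agreement of second derivatives of the odd extension; equivalently, classical elliptic regularity upgrades the weak solution to $C^{2,\alpha}$. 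The half-plane case is obtained by restricting $\tilde u$ to $\R^+\times\R$, where the condition $\tilde u=0$ on $\{x_1=0\}$ is automatic.

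Next I would verify $\nabla\tilde u\ne 0$ everywhere. In each open fundamental strip $(k,k+1)\times\R$ one has $\partial_{x_2}\tilde u=\pm\partial_{x_2}u\ne 0$. On the lines $\{x_1=k\}$, $\tilde u\equiv 0$ forces $\partial_{x_2}\tilde u=0$, but the Hopf conclusion above combined with continuity of $\partial_{x_1}\tilde u$ across the reflection lines yields $\partial_{x_1}\tilde u\ne 0$ on every $\{x_1=k\}$. The uniform bound $\nabla\tilde u\in L^\infty(\R^2)^2$ follows from interior and boundary Schauder estimates applied to translates of the equation, using that $\tilde u$ and $f(\tilde u)$ are uniformly bounded.

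Finally, to see that $\tilde u$ is not one-dimensional: its zero set contains the infinite family of parallel vertical lines $\{x_1=k\}_{k\in\Z}$. A $1$D solution $\tilde u(x)=g(e\cdot x)$ has zero set a union of lines orthogonal to $e$, so to contain two distinct parallel vertical lines we would need $e\parallel(1,0)$, forcing $\tilde u$ to be independent of $x_2$ and contradicting $\partial_{x_2}\tilde u>0$ on the open strips. The only delicate point in this scheme is the $C^2$ regularity of the reflected function, but this reduces cleanly to $f(0)=0$ (a consequence of oddness) together with the Dirichlet condition on the vertical boundary.
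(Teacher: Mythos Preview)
Your proposal is correct and follows essentially the same route as the paper: apply Theorem~\ref{tmain} on the strip $(0,1)\times\R$ with the odd nonlinearity $f_{\lambda^*}$ from Proposition~\ref{example}, use Hopf on the vertical boundary lines, and then extend by odd reflection (the paper builds the half-plane solution first and then reflects once more to reach $\R^2$, whereas you go directly to $\R^2$ and restrict, but this is cosmetic). Your explicit checks of the $C^2$ matching across the reflection lines and of the non-1D property are more detailed than the paper's own proof, which leaves these points implicit.
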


\begin{proof}
	For the proof, take $\omega = (0,1)$, and fix $\Omega_0= (0,1) \times \R$. Let $f$ be as given in Proposition~\ref{example}, so that assumptions~\ref{H1} and~\ref{H2} hold. By Theorem~\ref{tmain}, there exists a solution $u_0$ to the problem:
\[
	\label{PB3}
	\begin{cases}
		-\Delta u_0=f(u_0)&\text{in } \Omega_0,\\
		u_0=0&\text{on } \partial \Omega_0,
	\end{cases}
\]
such that $\partial_{x_2}u_0>0$ in $\Omega_0$. Observe moreover that $\partial_{x_1}u_0 \neq 0$ on $\partial \Omega_0$ by the Hopf lemma, since $f$ is smooth and $f(0)=0$. As a consequence $\nabla u_0 $ does not vanish in $\overline{\Omega}_0$ as claimed. The $L^{\infty}$ boundedness of $\nabla u_0$ follows from standard regularity estimates. Furthermore, since $f$ is of class $ C^\infty(\R)$, a bootstrap argument yield that $u\in C^\infty(\overline\Omega)$.

We consider now the case $\Omega_1 = \R^+ \times \R$. The idea is to extend the definition of $u_0$ to $\Omega_1$ by odd reflection along the $x_1$ axis. Being more specific, we define: 
\[
 v: (0,2) \times \R \to \R,  \quad v(x_1, x_2 )=\begin{cases}u_0(x_1, x_2) & \mbox{ if } 0\leq x_1 \leq 1, \\ -u_0(2-x_1, x_2) &  \mbox{ if } 1\leq x_1 \leq 2,\end{cases}
 \]
and
\[
 u_1: \Omega_1 \to \R,  \quad u_1(x_1, x_2) = v(x_1-2 [x_1/2], x_2),
 \]
where $[x]$ denotes the integer part of $x$.

Since $f\colon \R \to \R$ is an odd function, $u_1$ is a solution of~\ref{PB2} with $\Omega= \Omega_1$. Obviously $\nabla u_1 \in L^\infty(\Omega_1)^2$ and $\nabla u_1(x) \neq 0$ for all $x \in \Omega_1$.

For the case $\Omega= \R^2$, it suffices to define:
\[
u_2: \R^2 \to \R,  \quad u_2(x_1, x_2 )=\begin{cases} u_1(x_1, x_2) & \mbox{ if } x_1 \geq 0, \\ -u_1(-x_1, x_2) &  \mbox{ if } x_1 \leq 0.\end{cases}
\]
Again, $-\Delta u_2 = f(u_2)$ by the oddness of $f$, $\nabla u_2 \in L^{\infty}(\R^2)^2$ and $\nabla u_2(x) \neq 0 $ for all $x \in \R^2$.
 
\end{proof}

\begin{rmks}
\label{rmk:dg} 
$ $ 
\begin{enumerate}[$\bullet$]
\item Obviously we can obtain solutions which are not 1D and without critical points in higher dimensions, that is, in domains $\Omega= (0,1) \times \R^{N-1}$, in half-spaces $\R^+\times\R^{N-1}$ and in the whole space $\R^N$, by extending the solutions constantly with respect to the remaining variables.

\item Let us notice that the solutions given in Theorem~\ref{t.nocrit} satisfy:
\[
\inf_\Omega\abs{\nabla u}=0.
\]
Indeed, recall that $\lim_{x_N \to -\infty} u(x', x_N) =0$ in $ C^{2, \alpha}$ sense.

\item In the case of entire solutions, since $\nabla u (x) \neq 0$ for all $x \in \R^2$, we can write $\nabla u (x)= \rho(x) e^{i \theta(x)}$, by using complex notation. In such case we have that (\cite{Far03}):
\[
\divv (\rho^2 \nabla \theta)=0.
\]
Observe that $\theta(0,x_2)=0$, $\theta(1,x_2)= \pi$ for all $x_2 \in \R$; by reflection we obtain that $\theta$ grows linearly in $|x|$, indeed,
\[
 \limsup_{|x| \to + \infty }  \frac{|\theta (x)|}{|x|} = \pi.
 \]
This is to be compared with the results in~\cite{Salva}.
\end{enumerate}

\end{rmks}

The previous result implies, in particular, the existence of bounded solutions to the 2D Euler equations without stagnation points which are not shear flows, as shown in the next corollary.

\begin{cor} 
\label{cor}
Let us consider the stationary 2D Euler equations:
\begin{equation}
	\label{Eu2}
	\begin{cases}
		\eu\cdot\nabla \eu=-\nabla p&\text{in }\Omega,\\
		\divv \eu=0&\text{in }\Omega, \\
		\eu \cdot \eta =0 & \text{on } \partial \Omega,
	\end{cases}
\end{equation}	
where $\Omega \subseteq \R^2$ is either the strip $(0,1) \times \R$, the half-plane $\R^+ \times \R$, or the whole plane $\Omega = \R^2$ (in this last case the boundary condition is void). There exist smooth solutions to those problems such that $\eu \in L^{\infty}(\Omega)^2$ and $\eu (x) \neq 0$ for all $x \in \overline{\Omega}$, which are not shear flows.
		 
\end{cor}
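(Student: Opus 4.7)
The plan is to pass from the semilinear solution provided by Theorem~\ref{t.nocrit} to a stationary Euler flow via the streamfunction formulation, which is standard in 2D. Fix the appropriate domain $\Omega$ (strip, half-plane, or whole plane), let $u$ be the solution supplied by Theorem~\ref{t.nocrit} on $\Omega$, set $\psi=u$, and define the velocity field as $\eu=\nabla^\perp\psi=(-\partial_{x_2}\psi,\partial_{x_1}\psi)$. Then $\divv\eu=0$ holds automatically, $\eu$ is smooth, and $\eu\in L^\infty(\Omega)^2$ since $\nabla u\in L^\infty(\Omega)^2$. Crucially, $\eu(x)=0$ iff $\nabla u(x)=0$, and Theorem~\ref{t.nocrit} guarantees this never occurs in $\overline{\Omega}$.

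When $\partial\Omega\neq\emptyset$, the boundary consists of vertical lines $\{x_1=c\}$ with outer normal $\eta=\pm e_1$. From $\psi\equiv 0$ on $\partial\Omega$ one gets $\partial_{x_2}\psi\equiv 0$ there, so $\eu=(0,\partial_{x_1}\psi)$ is tangent to $\partial\Omega$ and the impermeability condition $\eu\cdot\eta=0$ holds. To verify the momentum equation, I would use the pointwise identity
\[
(\eu\cdot\nabla)\eu=\nabla\bigl(\tfrac12|\nabla\psi|^2\bigr)-\Delta\psi\,\nabla\psi,
\]
which is a direct component-wise computation from $\eu=\nabla^\perp\psi$. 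Since $-\Delta\psi=f(\psi)$, the second term equals $-\nabla F(\psi)$, and so setting $p=-\tfrac12|\nabla\psi|^2-F(\psi)$ gives $(\eu\cdot\nabla)\eu=-\nabla p$, which is exactly~\ref{Eu2}.

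It remains to argue that $\eu$ is not a shear flow. If $\eu(x)=U(x\cdot e^\perp)e$ for some $e\in\mathbb S^1$, the integral curves of $\eu$ would be straight lines parallel to $e$; but those integral curves coincide with the level sets of $\psi=u$, so $u$ would depend only on $x\cdot e^\perp$, contradicting the non-one-dimensionality asserted in Theorem~\ref{t.nocrit}. I do not foresee any real obstacle beyond Theorem~\ref{t.nocrit} itself: the entire argument is a translation through the streamfunction, and the only piece requiring genuine care is the vector-calculus identity displayed above, which is routine but must be written out explicitly.
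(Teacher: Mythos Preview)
Your proposal is correct and follows exactly the paper's approach: take the streamfunction $u$ from Theorem~\ref{t.nocrit}, set $\eu=\nabla^\perp u$, and read off the pressure $p=-\tfrac12|\nabla u|^2-F(u)$. You are simply more explicit than the paper (which invokes ``it is well known'') in verifying the momentum identity, the boundary condition, and the non-shear conclusion, but the strategy is identical.
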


\begin{proof}
Let us consider the solution $u$ given in Theorem~\ref{t.nocrit}. It is well known that the vector field $\eu=\nabla^\perp u=(-\partial_{x_2}u,\partial_{x_1}u)$ satisfies~\ref{Eu2} in $\Omega$, with pressure given by
\[
p=-\frac{\abs{\nabla{u}}^2}{2}-F(u).
\]

The function $u$ is called the streamfunction of the flow $\eu$. Since $u$ is not 1-dimensional, $\eu$ is not a shear flow. 

\end{proof}

%

\bibliographystyle{abbrv}
\bibliography{Euler.bib}

\begin{thebibliography}{10}

\bibitem{AGM16}
F.~Alessio, C.~Gui, and P.~Montecchiari.
\newblock Saddle solutions to {A}llen-{C}ahn equations in doubly periodic
  media.
\newblock {\em Indiana Univ. Math. J.}, 65(1):199--221, 2016.

\bibitem{AJM00}
F.~Alessio, L.~Jeanjean, and P.~Montecchiari.
\newblock Stationary layered solutions in {$\bold R^2$} for a class of non
  autonomous {A}llen-{C}ahn equations.
\newblock {\em Calc. Var. Partial Differential Equations}, 11(2):177--202,
  2000.

\bibitem{AJM02}
F.~Alessio, L.~Jeanjean, and P.~Montecchiari.
\newblock Existence of infinitely many stationary layered solutions in {$\bold
  R^2$} for a class of periodic {A}llen-{C}ahn equations.
\newblock {\em Comm. Partial Differential Equations}, 27(7-8):1537--1574, 2002.

\bibitem{AM05}
F.~Alessio and P.~Montecchiari.
\newblock Entire solutions in {${\Bbb R}^2$} for a class of {A}llen-{C}ahn
  equations.
\newblock {\em ESAIM Control Optim. Calc. Var.}, 11(4):633--672, 2005.

\bibitem{AM07}
F.~Alessio and P.~Montecchiari.
\newblock Brake orbits type solutions to some class of semilinear elliptic
  equations.
\newblock {\em Calc. Var. Partial Differential Equations}, 30(1):51--83, 2007.

\bibitem{AM13}
F.~Alessio and P.~Montecchiari.
\newblock Layered solutions with multiple asymptotes for non autonomous
  {A}llen-{C}ahn equations in {$\Bbb{R}^3$}.
\newblock {\em Calc. Var. Partial Differential Equations}, 46(3-4):591--622,
  2013.

\bibitem{A19}
C.~O. Alves.
\newblock Existence of a heteroclinic solution for a double well potential
  equation in an infinite cylinder of {$\Bbb R^N$}.
\newblock {\em Adv. Nonlinear Stud.}, 19(1):133--147, 2019.

\bibitem{AC00}
L.~Ambrosio and X.~Cabr\'{e}.
\newblock Entire solutions of semilinear elliptic equations in {$\bold R^3$}
  and a conjecture of {D}e {G}iorgi.
\newblock {\em J. Amer. Math. Soc.}, 13(4):725--739, 2000.

\bibitem{BCN97}
H.~Berestycki, L.~Caffarelli, and L.~Nirenberg.
\newblock Further qualitative properties for elliptic equations in unbounded
  domains.
\newblock {\em Ann. Scuola Norm. Sup. Pisa Cl. Sci. (4)}, 25(1-2):69--94, 1997.
\newblock Dedicated to Ennio De Giorgi.

\bibitem{BNV}
H.~Berestycki, L.~Nirenberg, and S.~R.~S. Varadhan.
\newblock The principal eigenvalue and maximum principle for second-order
  elliptic operators in general domains.
\newblock {\em Comm. Pure Appl. Math.}, 47(1):47--92, 1994.

\bibitem{BMR16}
J.~Byeon, P.~Montecchiari, and P.~H. Rabinowitz.
\newblock A double well potential system.
\newblock {\em Anal. PDE}, 9(7):1737--1772, 2016.

\bibitem{DePKW}
M.~del Pino, M.~Kowalczyk, and J.~Wei.
\newblock On {D}e {G}iorgi's conjecture in dimension {$N\geq 9$}.
\newblock {\em Ann. of Math. (2)}, 174(3):1485--1569, 2011.

\bibitem{Dupaigne}
L.~Dupaigne.
\newblock {\em Stable solutions of elliptic partial differential equations},
  volume 143 of {\em Chapman \& Hall/CRC Monographs and Surveys in Pure and
  Applied Mathematics}.
\newblock Chapman \& Hall/CRC, Boca Raton, FL, 2011.

\bibitem{EFR24}
A.~Enciso, A.~J. Fernández, and D.~Ruiz.
\newblock Smooth nonradial stationary euler flows on the plane with compact
  support.
\newblock {\em preprint ArXiv}, 2406.04414, 2024.

\bibitem{Far03}
A.~Farina.
\newblock One-dimensional symmetry for solutions of quasilinear equations in
  {$\Bbb R^2$}.
\newblock {\em Boll. Unione Mat. Ital. Sez. B Artic. Ric. Mat. (8)},
  6(3):685--692, 2003.

\bibitem{GG98}
N.~Ghoussoub and C.~Gui.
\newblock On a conjecture of {D}e {G}iorgi and some related problems.
\newblock {\em Math. Ann.}, 311(3):481--491, 1998.

\bibitem{GSPS21}
J.~G\'{o}mez-Serrano, J.~Park, and J.~Shi.
\newblock Existence of non-trivial non-concentrated compactly supported
  stationary solutions of the 2{D} euler equation with finite energy.
\newblock {\em Mem. Amer. Math. Soc.}, in press.

\bibitem{GSPSJY21}
J.~G\'{o}mez-Serrano, J.~Park, J.~Shi, and Y.~Yao.
\newblock Symmetry in stationary and uniformly rotating solutions of active
  scalar equations.
\newblock {\em Duke Math. J.}, 170(13):2957--3038, 2021.

\bibitem{Gui}
C.~Gui.
\newblock Hamiltonian identities for elliptic partial differential equations.
\newblock {\em J. Funct. Anal.}, 254(4):904--933, 2008.

\bibitem{GXX24}
C.~Gui, C.~Xie, and H.~Xu.
\newblock On a classification of steady solutions to two-dimensional euler
  equations.
\newblock {\em preprint ArXiv}, 2405.15327, 2024.

\bibitem{HK23}
F.~Hamel and A.~Karakhanyan.
\newblock Potential flows away from stagnation in infinite cylinders.
\newblock {\em preprint ArXiv}, 2312.02571, 2023.

\bibitem{HN17}
F.~Hamel and N.~Nadirashvili.
\newblock Shear flows of an ideal fluid and elliptic equations in unbounded
  domains.
\newblock {\em Comm. Pure Appl. Math.}, 70(3):590--608, 2017.

\bibitem{HN19}
F.~Hamel and N.~Nadirashvili.
\newblock A {L}iouville theorem for the {E}uler equations in the plane.
\newblock {\em Arch. Ration. Mech. Anal.}, 233(2):599--642, 2019.

\bibitem{HN21}
F.~Hamel and N.~Nadirashvili.
\newblock Circular flows for the {E}uler equations in two-dimensional annular
  domains, and related free boundary problems.
\newblock {\em J. Eur. Math. Soc. (JEMS)}, 25(1):323--368, 2023.

\bibitem{Ki82}
K.~Kirchg\"{a}ssner.
\newblock Wave-solutions of reversible systems and applications.
\newblock {\em J. Differential Equations}, 45(1):113--127, 1982.

\bibitem{LWX23}
W.~S. Leung, T.~K. Wong, and C.~Xie.
\newblock On the characterization, existence and uniqueness of steady solutions
  to the hydrostatic {E}uler equations in a nozzle.
\newblock {\em preprint ArXiv}, 2301.10402, 2023.

\bibitem{LLSX23}
C.~Li, Y.~L\"{u}, H.~Shahgholian, and C.~Xie.
\newblock Analysis on the steady euler flows with stagnation points in an
  infinitely long nozzle.
\newblock {\em preprint ArXiv}, 2203.08375, 2023.

\bibitem{Ra94}
P.~H. Rabinowitz.
\newblock Solutions of heteroclinic type for some classes of semilinear
  elliptic partial differential equations.
\newblock {\em J. Math. Sci. Univ. Tokyo}, 1(3):525--550, 1994.

\bibitem{Ra02}
P.~H. Rabinowitz.
\newblock Spatially heteroclinic solutions for a semilinear elliptic
  {P}.{D}.{E}.
\newblock {\em ESAIM Control Optim. Calc. Var.}, 8:915--931, 2002.

\bibitem{Ra04}
P.~H. Rabinowitz.
\newblock A new variational characterization of spatially heteroclinic
  solutions of a semilinear elliptic {PDE}.
\newblock {\em Discrete Contin. Dyn. Syst.}, 10(1-2):507--515, 2004.

\bibitem{RS03}
P.~H. Rabinowitz and E.~Stredulinsky.
\newblock Mixed states for an {A}llen-{C}ahn type equation.
\newblock {\em Comm. Pure Appl. Math.}, 56(8):1078--1134, 2003.

\bibitem{RS04}
P.~H. Rabinowitz and E.~Stredulinsky.
\newblock Mixed states for an {A}llen-{C}ahn type equation. {II}.
\newblock {\em Calc. Var. Partial Differential Equations}, 21(2):157--207,
  2004.

\bibitem{Ru22}
D.~Ruiz.
\newblock Symmetry results for compactly supported steady solutions of the 2{D}
  {E}uler equations.
\newblock {\em Arch. Ration. Mech. Anal.}, 247(3):Paper No. 40, 25, 2023.

\bibitem{Sa09}
O.~Savin.
\newblock Regularity of flat level sets in phase transitions.
\newblock {\em Ann. of Math. (2)}, 169(1):41--78, 2009.

\bibitem{Salva}
S.~Villegas.
\newblock Sharp {L}iouville theorems.
\newblock {\em Adv. Nonlinear Stud.}, 21(1):95--105, 2021.

\bibitem{WZ23}
Y.~Wang and W.~Zhan.
\newblock On the rigidity of the 2{D} incompressible euler equations.
\newblock {\em preprint ArXiv}, 2307.00197, 2023.

\end{thebibliography}

\bigskip 

{\bf Disclosure statement: }  The authors report there are no competing interests to declare. 
\bigskip

\end{document}